\numberwithin{equation}{section}
\let\al=\alpha
\let\f=\frac
\let\na=\nabla
\let\pa=\partial
\def\R{\mathbf R}
\def\N{\mathbf N}
\def\cK{\mathcal K}
\def\no{\noindent}
\newcommand{\beq}{\begin{equation}}
\newcommand{\eeq}{\end{equation}}
\newcommand{\ben}{\begin{eqnarray}}
\newcommand{\een}{\end{eqnarray}}
\newcommand{\beno}{\begin{eqnarray*}}
\newcommand{\eeno}{\end{eqnarray*}}
\newtheorem{theorem}{Theorem}[section]
\newtheorem{lemma}[theorem]{Lemma}
\newtheorem{proposition}[theorem]{Proposition}
\newtheorem{corol}[theorem]{Corollary}
\newtheorem{remark}[theorem]{Remark}
\newtheorem{Theorem}{Theorem}[section]
\newtheorem{Lemma}[Theorem]{Lemma}
\newtheorem{Remark}[Theorem]{Remark}
\begin{document}

\title[Separation and local behavior for the Steady Prandtl equation]
{Boundary layer separation and local behavior for the Steady Prandtl equation}

\author{Weiming Shen}
\address{School of Mathematical Sciences, Capital Normal University,100048, Beijing, P. R. China}
\email{wmshen@pku.edu.cn}

\author{Yue Wang}
\address{School of Mathematical Sciences, Peking University, 100871, Beijing, P. R. China}
\email{yuewang37@pku.edu.cn}

\author{Zhifei Zhang}
\address{School of Mathematical Sciences, Peking University, 100871, Beijing, P. R. China}
\email{zfzhang@math.pku.edu.cn}

\begin{abstract}
In the case of favorable pressure gradient, Oleinik proved the global existence of classical solution for the 2-D steady Prandtl equation for a class of positive data. In the case of adverse pressure gradient, an important physical phenomena is the boundary layer separation. In this paper, we prove the boundary layer separation for a large class of Oleinik's data and confirm Goldstein's hypothesis concerning the local behavior of the solution near the separation, which gives a partial answer to open problem 5 proposed by Oleinik and Samokin in \cite{Olei}.
\end{abstract}

\date{\today}%17.4.2019

\maketitle

\section{Introduction}
In this paper, we study the 2-D steady Prandtl equation
\begin{equation}\label{eq:Prandtl}
  \left\{
  \begin{aligned}
    &u\pa_x u +v\pa_{y}u-\pa_{y}^2u+\pa_x p=0,\quad x\ge 0,\, y\ge 0,\\
    &\pa_xu+\pa_y v=0,\\
    &u|_{y=0}=v|_{y=0}=0\quad\mbox{and}\quad \displaystyle\lim_{y\to+\infty} u(x,y)=U(x),
    \end{aligned}
  \right.
\end{equation}
where the outer flow $\big(U(x),p(x)\big)$ satisfies
\ben\label{eq:Bernu}
U(x)U'(x)+p'(x)=0.
\een
This system derived by Prandtl is used to describe the behavior of the solution near $y=0$ for the steady Navier-Stokes equations with non slip boundary condition when the viscosity coefficient $\nu$ tends to zero:
\begin{equation}\label{eq:NS}
  \left\{
  \begin{aligned}
    &u^\nu\cdot\na u^\nu-\nu\Delta u^\nu+\na p=f^\nu,\\
    &{\rm div} u^\nu=0,\\
    &u^\nu|_{y=0}=0.
      \end{aligned}
  \right.
\end{equation}
Roughly speaking, away from the boundary, the solution $u^\nu$ can be described by the Euler equations; near the boundary $y=0$,  $u^\nu$ behaves as
\beno
u^\nu(x,y)=\big(u(x,y/\sqrt{\nu}),\sqrt{\nu}v(x,y/\sqrt{\nu})\big),
\eeno
where $(u,v)$ satisfies the Prandtl type equation.

In general case, the inviscid limit problem is still open. However, there are some important progress on the stability  for some special boundary layer flows such as the Blasius flow and shear flow \cite{Guo, GM, GMM}.
For the unsteady Navier-Stokes equations with non slip boundary condition, the inviscid limit was established in the following cases: (1) analytic data \cite{SC1,SC2, WWZ}; (2) the initial vorticity vanishing in a neighborhood of the boundary \cite{Mae, FTZ}; (3) the domain and the initial data having a circular symmetry \cite{LMT, MT}; See the review paper \cite{MM} for a complete introduction.

Although the inviscid limit is not rigorously justified, the Prandtl equation is
a good model in engineering describing the behavior of the solution for the Navier-Stokes equations at large Reynolds number. The goal of this paper is to study the boundary layer separation phenomena based on the steady Prandtl equation \eqref{eq:Prandtl}. See \cite{EE, KVW, WZ} for the singularity formation of the unsteady Prandtl equation.

The existence and regularity of  solution for  the system \eqref{eq:Prandtl}  was proved by Oleinik \cite{Olei}  for a class of positive data $u_0(y)$ prescribed at $x=0$. Indeed, if $u_0(y)>0$, the system \eqref{eq:Prandtl} could be viewed as a parabolic equation with the initial boundary condition where the variable $x$ is the time direction. For the favourable pressure gradient $p'(x)\le 0$, the solution is global in $x$(see \cite{XZ} for the unsteady case). On the other hand, for the adverse pressure gradient $p'(x)>0$, the boundary layer separation could occur in a finite time. Goldstein made a formal asymptotic analysis for the solution near
the separation point, i.e., $\pa_yu(x^*,0)=0$ based on
{\bf three key assumptions}(see Page 47 in \cite{G}):{\it  (1) there is a singularity at separation;
(2) there is a finite value of $u$ at separation for $y\neq 0$;
(3) $a_2=\f12$(i.e., $u=a_2y^2+a_3y^3+\cdots$ at $x=0$). Related to assumption (3), Professor Hartree found (empirically) that in his solution $\pa_yu(x,0)$ behaved near $x = 0$(corresponding to separation) like a
multiple of $x^r$, where $r$ is certainly less than 1 and greater than $\f14$. }Thus, he made the following formal expansion
\beno
&&u(0,y)=\f12 y^2+a_3y^3+\cdots,\\
&&\pa_yu(x,0)=\al_1x^\f12+\al_2x^\f34+\al_3 x+\al_4x^\f54+\cdots.
\eeno

In a review paper \cite{E}, E claimed an important progress in an unpublished paper(joint with Cafferalli): if the initial data $u_0(y)$ and the pressure $p(x)$ satisfy
\beno
u_0(y)^2-\f32 \pa_yu_0(y)\int_0^yu_0(z)dz\ge 0,\quad p'(x)\ge \al>0,
\eeno
then there exists an $x^*>0$ so that the solution can not be extended to $x>x^*$; moreover, the sequence of $u_\lambda$ defined by
\beno
u_\lambda(x,y)=\lambda^{-\f12}u(x^*-\lambda x, \lambda^\f14y)
\eeno
is compact in $C^0(\R^+\times \R^+)$. In some sense, this means that
the solution behaves as
\beno
u(x,y)\sim (x^*-x)^\f12U_0\Big(\f y {(x^*-x)^\f14}\Big),\quad x<x^*.
\eeno

In a recent important work \cite{DM}, Dalibard and Masmoudi prove the boundary layer separation for a class of special data and $p'(x)=1$, and show that the solution behaves near the separation:
\beno
\pa_yu(x,0)\sim (x^*-x)^\f12,\quad x<x^*.
\eeno
This result is compatible with Goldstein's assumption (3).

In the case of adverse pressure gradient, for general Oleinik's data ensuring the existence of the solution, whether the boundary layer separation can occur and the local behavior of the solution near the separation is a long-standing problem. The following open problem was proposed by  Oleinik and Samokin 
(P.501 in \cite{Olei}):\smallskip

{\it 
It would be interesting to study the local structure of the solution 
of the Prandtl system in the vicinity of the separation point.
}

\smallskip

In this paper, we prove that  the boundary layer separation can occur in a finite time for a large class of Oleinik's data in the case of the adverse pressure gradient. Moreover, we study the local behavior of the solution near the separation and confirm Goldstein's assumption (3): for $x$ close to $x^*$,
\beno
\pa_yu(x,0)\le C(x^*-x)^\f14.
\eeno
Furthermore, there exists $y(x)$ satifying $\int_0^{y(x)}u(x,y')dy'\le C(x^*-x)^\f34$ for any $x$ close to $x^*$  so that
\beno
\pa_yu\big(x,y(x)\big)\sim (x^*-x)^\f14.
\eeno
This result together with Dalibard and Masmoudi's result shows that
the solution has a different separation rate when the point approaches the separation point along a different curve. This complex local behavior of the solution near the separation is perhaps related to Stewartson's observation that some coefficients in the asymptotic expansion are not uniquely determined \cite{Ste}. \smallskip

The complete results and their proof will be presented in subsequent sections. In section 2, we review some classical results on the existence and regularity of the solution. In section 3, we prove the boundary layer separation. In section 4, we prove a lower bound on the separation rate.
In section 5, we study the local behavior of the solution near the separation. In section 6, we extend our result to general adverse pressure gradient.

\section{Oleinik's result and Von  Mises transformation}

Let us first introduce a class of data denote by $\cK$, which satisfies
\begin{align*}
 &u\in C_b^{3,\alpha}\big([0,+\infty)\big)(\al>0),\quad u(0)=0,\,\,u_y(0)>0,\,\,u_y(y)\geq0\, \text{for}\, y\in[0,+\infty),\\
 &\displaystyle\lim_{y\to+\infty} u(0,y)=U(0)>0,\quad u_{yy}(y)-\partial_xp(0)=O(y^2).
\end{align*}
For the data $u_0\in \cK$, Oleinik proved the existence of solution of the system \eqref{eq:Prandtl}(see Proposition 1.1 in \cite{DM} and Theorem 2.1.1 in \cite{Olei}).

\begin{proposition}\label{prop:olei} If
$u_0\in \cK,$ then there exists $X>0$ such that the steady Prandtl equation \eqref{eq:Prandtl} admits a solution $u\in C^1([0,X)\times\R_+)$ with the following properties:

\begin{itemize}
\item[1.] Regularity: $u$ is bounded and continuous in $[0,X]\times[0,+\infty); \,u_{y},u_{yy}$ are bounded and continuous in $[0,X)\times\R_+$; and $v,v_{y},u_{x}$ are locally bounded and continuous in $[0,X)\times\R_+.$
\item[2.] Non-degeneracy: $u(x,y)>0$ in $[0,X)\times (0,+\infty)$ and for all $\bar{x}<X,$ there exists $y_0>0,m>0$ so that
\beno
\partial_yu(x,y)\geq m\quad \text{in}\,\, [0,\bar{x}]\times[0,y_0].
\eeno
\item[3.] Global existence: if $p'(x)\le 0$, then the solution is global in $x$.\end{itemize}
\end{proposition}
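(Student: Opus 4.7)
The plan is to reduce the steady Prandtl system to a single quasilinear degenerate parabolic equation via the Crocco transformation, apply classical parabolic theory for local existence, and use the maximum principle to extract the non-degeneracy and global extension statements.

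First, since $u_{0,y}(0)>0$ and $u_{0,y}\ge 0$ on $[0,+\infty)$, the datum $u_0$ is strictly increasing on a neighborhood of $y=0$; by continuity the same holds for $u(x,\cdot)$ on a short $x$-interval, so $y\mapsto u(x,y)$ is invertible on a strip near the boundary. Adopting $(x,u)$ as new independent variables and setting $\tau(x,u):=\pa_y u\big|_{y=y(x,u)}$, a direct computation using \eqref{eq:Prandtl} converts the system into the quasilinear parabolic equation
\begin{equation*}
\tau_x-\tau^2\tau_{uu}+p'(x)\tau_u=0,\quad 0<u<U(x),
\end{equation*}
with nonlinear boundary condition $\tau\tau_u\big|_{u=0}=p'(x)$ (obtained by evaluating the momentum equation at $y=0$), Dirichlet condition $\tau\big|_{u=U(x)}=0$, and initial datum $\tau(0,u)=u_{0,y}\bigl(u_0^{-1}(u)\bigr)$. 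The condition $u_{0,yy}(y)-\pa_x p(0)=O(y^2)$ encoded in $\cK$ is exactly the compatibility requirement that makes $\tau(0,\cdot)$ sufficiently regular at the degenerate endpoint $u=0$.

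For local existence I would freeze $\tau^2$ in an iteration: linearize around a reference profile to obtain a linear problem that is uniformly parabolic away from $u=0$, solve it in a weighted H\"older class using Schauder estimates (treating the degeneracy at $u=0$ through the boundary ODE), and close a Banach fixed-point argument on a short interval $[0,X)$. Non-degeneracy then follows from the maximum principle applied to the Crocco equation: $\tau>0$ in the interior by the parabolic Hopf lemma, while the boundary ODE $\tau\tau_u=p'(x)$ together with the strict positivity $\tau(0,0)>0$ yields a uniform lower bound $\tau(x,0)\ge m$ on $[0,\bar x]$. Translating back via $\pa_y u=\tau$ and $u_0^{-1}$ gives $\pa_y u\ge m$ on $[0,\bar x]\times[0,y_0]$ and $u>0$ for $y>0$. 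The velocity $v$ is then reconstructed from the divergence-free condition through $v(x,y)=-\int_0^y\pa_x u(x,y')\,dy'$, and the stated regularity follows from Schauder estimates for $\tau$ combined with the smoothness of the change of variables.

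For global existence under $p'(x)\le 0$, the boundary condition reads $\tau\tau_u\big|_{u=0}\le 0$, which prevents $\tau(x,0)$ from being driven down by the boundary dynamics; combined with a comparison argument using a sub-solution built from a concave profile of $u$ vanishing at $u=U(x)$, this yields a uniform lower bound on $\tau$ over any finite $x$-interval. Since loss of parabolicity is the only obstruction to continuation, the solution extends for all $x\ge 0$. The principal obstacle throughout is the degeneracy of the Crocco equation at $u=0$ paired with the nonlinear Robin-type boundary condition $\tau\tau_u=p'(x)$: verifying the compatibility of the initial data, establishing Schauder-type regularity up to the degenerate boundary, and propagating strict positivity of $\tau$ there form the delicate core of the argument.
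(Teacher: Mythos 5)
The paper does not give a proof of Proposition~\ref{prop:olei}: it cites Theorem~2.1.1 of Oleinik--Samokhin \cite{Olei} (and Proposition~1.1 of \cite{DM}), and then carries out its own analysis entirely in von~Mises variables $\psi=\int_0^y u\,dz$, $w=u^2$, where \eqref{eq:Prandtl-VM} lives on the fixed half-line $\psi>0$ with a Dirichlet condition $w(x,0)=0$ and the degeneracy sits at the wall because $\sqrt w\to 0$ there. Your choice of Crocco variables is a legitimate alternative (it is in fact the framework Oleinik uses to prove the cited theorem), so the strategy is defensible. However, the central equation you wrote down is incorrect. With $\tau(x,u)=\pa_y u$ as a function of $(x,u)$, differentiating the momentum equation once in $y$ and changing variables gives
\begin{equation*}
u\,\tau_x-\tau^2\tau_{uu}-p'(x)\,\tau_u=0,\qquad 0<u<U(x),
\end{equation*}
not $\tau_x-\tau^2\tau_{uu}+p'(x)\tau_u=0$: the convective factor $u$ on $\tau_x$ is missing and the sign on $p'\tau_u$ is wrong. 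One can check this against Blasius ($p'=0$): the self-similar profile solves $u\tau_x=\tau^2\tau_{uu}$ but not $\tau_x=\tau^2\tau_{uu}$.

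This is not a cosmetic slip, because your subsequent reasoning about where the equation degenerates hinges on it and is internally inconsistent. The correct Crocco equation degenerates at $u=0$ because the coefficient $u$ of $\tau_x$ vanishes there, and separately at $u=U(x)$ because $\tau^2\to 0$ there; the diffusion coefficient $\tau^2$ is bounded away from zero at the wall as long as the solution exists, since $\tau(x,0)=\pa_y u(x,0)>0$ before separation. Your version of the equation has no factor $u$ and is therefore uniformly parabolic near $u=0$, so the claims that it is ``uniformly parabolic away from $u=0$'' and that $u=0$ is a ``degenerate endpoint'' do not follow from the equation as you wrote it. Relatedly, the role of the condition $u_{0,yy}(y)-\pa_x p(0)=O(y^2)$ in $\cK$ is to provide zeroth- and first-order corner compatibility with the nonlinear boundary relation $\tau\tau_u|_{u=0}=p'(x)$ (which you do derive correctly), not regularity at a diffusive degeneracy. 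Finally, the quantitative steps are only gestured at: the lower bound $\tau(x,0)\ge m$ on compact $[0,\bar x]$ is really the continuation criterion defining the maximal interval $[0,X)$ (cf.\ Lemma~\ref{lem:extension}) rather than a direct Hopf-lemma consequence, and for $p'\le 0$ the boundary sign $\tau\tau_u|_{u=0}\le 0$ alone does not prevent $\tau(x,0)\downarrow 0$; Oleinik's global result rests on an explicit barrier/comparison construction in the Crocco (or von~Mises) variables. To make the proposal work you would need to start from the correct doubly degenerate equation and then supply the barrier lemmas at both ends $u=0$ and $u=U(x)$.
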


Oleinik's proof is based on the Von Mises transformation:
\begin{align}\label{eq:VM}
 \psi(x,y)=\int_0^yu(x,z)dz,\quad w=u^2.
\end{align}
A direct calculation shows that
  \begin{align}\label{PvM}
  \begin{split}
& 2\partial_y u =\partial_\psi w,\quad2\partial^2_{y} u=\sqrt{w}\partial^2_{\psi}w.
 \end{split}
\end{align}
Then the new unknown $w(x,\psi)$ satisfies
  \begin{align}\label{eq:Prandtl-VM}
 \pa_xw-\sqrt{w}\partial^2_{\psi}w=-2\pa_xp\quad\text{in}\quad[0,X)\times \R_+,
 \end{align}
together with
\ben\label{eq:Prandtl-bc}
w(0,\psi)=w_0(\psi)=u_0(y)^2,\quad w(x,0)=0,\quad \displaystyle\lim_{\psi\to+\infty}w(x,\psi)=U(x)^2.
\een

From Lemma 2.1.9 and Lemma 2.1.11 in \cite{Olei}(or Lemma 3.1 in \cite{DM}), we know that

\begin{lemma}\label{lem:w-bound}
Let $u$ be a solution constructed in Proposition \ref{prop:olei}. It holds that
\begin{itemize}

\item[1.] the solution $w(x,\psi)$ is increasing with respect to $\psi.$

\item[2.] for any $x\in [0,X_1], X_1<X$, there exists $C_{X_1}>0$ so that
\begin{align*}
&|\partial_\psi w(x,\psi)|\le C_{X_1}\quad \psi\ge 0,\\
&|\partial^2_\psi w(x,\psi)|+|\partial^3_\psi w(x,\psi)|\leq C_{X_1}\quad \psi\ge 1.
\end{align*}

\item[3.] for any $x\in[0,X)$,
 \begin{align*} \displaystyle\lim_{\psi\to+\infty} \partial_\psi w(x,\psi)=0,\quad \displaystyle\lim_{\psi\to+\infty} \partial^2_\psi w(x,\psi)= 0.
\end{align*}
\end{itemize}
\end{lemma}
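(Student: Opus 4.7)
The plan is to prove the three parts in order, relying on the transfer formulas $\partial_\psi w = 2\partial_y u$ and $\sqrt{w}\,\partial_\psi^2 w = 2\partial_y^2 u$ from \eqref{PvM} together with the regularity already guaranteed by Proposition \ref{prop:olei}.

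For part 1, I would work in the original $(x,y)$ variables with $\omega := \partial_y u$. Differentiating \eqref{eq:Prandtl} in $y$ and using incompressibility $\partial_y v = -\partial_x u$, the cross-terms cancel and $\omega$ satisfies the homogeneous linear parabolic equation
\[
u\,\partial_x\omega + v\,\partial_y\omega - \partial_y^2\omega = 0,
\]
with $\omega(0,y)=u_0'(y)\geq 0$ by the hypothesis $u_0\in\cK$ and $\omega\to 0$ as $y\to +\infty$. The boundary value $\omega(x,0)$ is positive by the non-degeneracy in Proposition \ref{prop:olei}, so applying the maximum principle on $[0,\bar x]\times[0,Y]$ with $\bar x<X$ and $Y\uparrow +\infty$ yields $\omega\geq 0$. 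Translating back via $\partial_\psi w = 2\omega$ gives the monotonicity of $w$ in $\psi$.

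For part 2, the bound $|\partial_\psi w|=2|\partial_y u|\leq C_{X_1}$ on $[0,X_1]\times\R_+$ is immediate from the bound on $\partial_y u$ in Proposition \ref{prop:olei}. For the higher derivatives on $\psi\geq 1$, I first establish the non-degeneracy $w(x,\psi)\geq c>0$ on $[0,X_1]\times[1,\infty)$: by monotonicity in $\psi$ it suffices to bound $w(x,1)$ from below, which follows from $u_0\in\cK$ (ensuring $w_0$ is positive for $\psi$ bounded away from $0$) and a comparison-principle argument on \eqref{eq:Prandtl-VM} using the boundedness of $\partial_x p$. With $\sqrt{w}$ uniformly positive on this region, $|\partial_\psi^2 w|=2|\partial_y^2 u|/\sqrt{w}$ is bounded by Proposition \ref{prop:olei}; the bound on $\partial_\psi^3 w$ then follows from interior Schauder estimates applied to the uniformly parabolic equation \eqref{eq:Prandtl-VM} on the strip $\psi\in[\tfrac12,\infty)$, after differentiating once in $\psi$.

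For part 3, I would pass to the limit $\psi\to +\infty$ in \eqref{eq:Prandtl-VM}. The boundary condition $w(x,\psi)\to U(x)^2$ together with the uniform regularity just proved gives $\partial_x w(x,\psi)\to 2U(x)U'(x)=-2\partial_x p$ by Bernoulli's law \eqref{eq:Bernu}, so $\sqrt{w}\,\partial_\psi^2 w\to 0$; since $\sqrt{w}\to U(x)>0$, this yields $\partial_\psi^2 w(x,\psi)\to 0$. For $\partial_\psi w$, monotonicity of $w$ and the finite limit $U(x)^2$ give $\int_0^\infty \partial_\psi w\,d\psi<\infty$, and the uniform Lipschitz bound on $\partial_\psi w$ coming from the $\partial_\psi^2 w$ estimate rules out oscillation, forcing $\partial_\psi w\to 0$. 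The main obstacle is the quantitative lower bound $w(x,\psi)\geq c>0$ on $[0,X_1]\times[1,\infty)$, since \eqref{eq:Prandtl-VM} degenerates where $w$ vanishes and every higher regularity statement above hinges on uniform parabolicity there; the remainder of the argument is essentially bookkeeping with \eqref{PvM} and standard interior parabolic theory.
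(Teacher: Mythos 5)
The paper does not actually prove this lemma: it is quoted verbatim from Lemmas 2.1.9 and 2.1.11 of Oleinik--Samokhin (or Lemma 3.1 of Dalibard--Masmoudi), so there is no in-paper argument to compare against. Your sketch is essentially the standard proof of these facts and is sound in outline, but two points deserve care. First, in part 1 the equation $u\,\partial_x\omega+v\,\partial_y\omega-\partial_y^2\omega=0$ degenerates at $y=0$ (where $u=v=0$), so the "time" direction coefficient vanishes on the boundary and the textbook parabolic minimum principle does not apply directly; Oleinik's treatment works with $W=\partial_\psi w$ in von Mises variables and uses barriers near $\psi=0$, and your version needs an analogous perturbation argument (e.g.\ considering $\omega+\e e^{\lambda x}$ and tracking where a negative minimum could sit). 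Second, in part 3 your derivation of $\partial_\psi^2w\to0$ passes through $\partial_xw(x,\psi)\to\frac{d}{dx}U(x)^2$, which is an interchange of $\lim_{\psi\to\infty}$ with $\partial_x$ and requires uniform-in-$x$ convergence that you have not established; it is cleaner to run the same Barbalat-type argument you use for $\partial_\psi w$ (namely: $\partial_\psi w\to0$ plus the uniform bound on $\partial_\psi^3w$ from part 2 forces $\partial_\psi^2w\to0$, since otherwise $\partial_\psi w$ would vary by a fixed amount on intervals of fixed length). Finally, the lower bound $w\ge c>0$ on $[0,X_1]\times[1,+\infty)$, which you flag as the main obstacle, needs no comparison principle: monotonicity of $w$ in $\psi$ reduces it to bounding $w(x,1)$ below, and since $\psi(x,y)\le\|u\|_\infty y$ forces $y(x,1)\ge1/\|u\|_\infty$ while the non-degeneracy $\partial_yu\ge m$ on $[0,X_1]\times[0,y_0]$ gives $u(x,y)\ge m\min(y,y_0)$, one gets $w(x,1)\ge m^2\min(y_0,1/\|u\|_\infty)^2$ directly from Proposition \ref{prop:olei}.
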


 The following lemma comes from  Lemma 4 in \cite{MS}, which is essentially given by Oleinik \cite{Olei}.

\begin{lemma}\label{lem:extension}
Let  $k_1=\min_{0\leq x\leq x_1} U(x)^2$ and $k_2=\max_{0\leq x\leq x_1}  |p'(x)|$ for some $x_1>0$. Assume that  for some $k_0,k>0$,
\begin{align}\label{ex1dr}
 \inf\{\partial_\psi w_0(\psi):0\leq \psi\leq k_0 \}\geq k.
\end{align}
Then there exists a positive constant $X_0$ depending only on $k_0,k,k_1$ and $k_2$ so that the local solution $w(x,\psi)$ to \eqref{eq:Prandtl-VM} exists in $\{(x,\psi)|(x,\psi)\in[0,X_0]\times[0,+\infty)\}$.
\end{lemma}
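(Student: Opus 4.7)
The plan is to produce a quantitative lower bound on $a:=\partial_\psi w$ near the wall $\psi=0$, depending only on the four parameters $k_0,k,k_1,k_2$, so that the non-degeneracy condition underlying the continuation criterion of Proposition~\ref{prop:olei} persists on a time interval $[0,X_0]$ of the same quality. Concretely, the local solution extends past some $x=X$ as soon as there exist $\psi_*,m>0$ with $\partial_\psi w(x,\psi)\ge m$ on $[0,X]\times[0,\psi_*]$; the complementary upper bounds on $w$ and its $\psi$-derivatives are supplied by $w\le U^2\le k_1$ and Lemma~\ref{lem:w-bound}.

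The tool is the parabolic maximum principle applied to the equation satisfied by $a$, obtained by differentiating \eqref{eq:Prandtl-VM} in $\psi$:
\[
\partial_x a-\sqrt{w}\,\partial_\psi^2 a-\frac{a}{2\sqrt{w}}\,\partial_\psi a=0.
\]
A natural candidate subsolution on the rectangle $Q_{X_0}=[0,X_0]\times[0,k_0]$ is the $\psi$-independent function $\underline a(x):=k-C_1 x$, chosen so that $\partial_\psi\underline a\equiv 0$ annihilates the (singular) lower-order term, reducing the subsolution inequality to the automatic $-C_1\le 0$. Comparison then yields $a\ge k/2$ throughout $Q_{X_0}$ once $C_1 X_0\le k/2$, provided the inequality $a\ge\underline a$ holds on each of the three components of the parabolic boundary of $Q_{X_0}$.

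The initial face $\{x=0\}$ is the hypothesis. The lateral face $\{\psi=k_0\}$ is handled by exploiting the monotonicity of $w$ in $\psi$ together with $w_0(k_0)\ge k k_0$ to produce a positive lower bound on $w$ in $\{\psi\ge k_0\}$, which renders \eqref{eq:Prandtl-VM} uniformly parabolic there; a short-time parabolic comparison, with quantitative time depending only on $k_0,k,k_1,k_2$ (via $|p'|\le k_2$ and the upper bound $w\le k_1$), then propagates the initial bound $a(0,\cdot)\ge k$ to $a(x,k_0)\ge k/2$ on $[0,X_0]$.

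The main obstacle will be the wall face $\{\psi=0\}$, where the equation for $a$ degenerates and $a(x,0)=2\partial_y u(x,0)$ is precisely the separation indicator we are trying to bound from below. To close this gap I plan to return to the equation for $w$ itself and construct a near-wall sub-barrier of the form $\underline w(x,\psi)=\mu\psi+\nu\psi^{3/2}$ suggested by the local expansion $w\sim A(x)\psi+\tfrac{8p'(x)}{3\sqrt{A(x)}}\psi^{3/2}$ at the wall: matching the constant term in the residual $\partial_x\underline w-\sqrt{\underline w}\,\partial_\psi^2\underline w+2p'$ forces $\nu\ge 8k_2/(3\sqrt{\mu})$, which is finite thanks to $|p'|\le k_2$, while the leading constant $\mu$ is pinned to $k/2$ by the initial bound on $\partial_\psi w_0$. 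The comparison $w\ge\underline w$, differentiated in $\psi$ at $\psi=0$, delivers the missing wall estimate $a(x,0)\ge\mu=k/2$ on a quantitative time interval, which closes the maximum-principle argument and produces the extension time $X_0=X_0(k_0,k,k_1,k_2)$ claimed in the lemma.
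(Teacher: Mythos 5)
First, note that the paper does not prove this lemma at all: it is quoted verbatim from Lemma 4 of \cite{MS} (and attributed to Oleinik), so there is no in-paper argument to compare yours against. Judged on its own terms, your sketch does contain the correct central idea for the hardest face of the comparison region, namely the wall sub-barrier $\underline w=\mu\psi+\nu\psi^{3/2}$ with $\nu\geq 8k_2/(3\sqrt{\mu})$ chosen so that $-\sqrt{\underline w}\,\partial_\psi^2\underline w\leq -\tfrac{3\nu}{4}\sqrt{\mu}\leq -2k_2\leq -2p'$ absorbs the adverse pressure gradient; differentiating the comparison at $\psi=0$ then yields $\partial_\psi w(x,0)\geq\mu$, which is exactly the non-degeneracy needed to continue the solution. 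This is in the spirit of Oleinik's original argument.

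There is, however, a genuine gap at the lateral face of every comparison region you set up, and as written the argument does not close. Your treatment of $\{\psi=k_0\}$ is circular: monotonicity of $w$ in $\psi$ together with $w_0(k_0)\geq kk_0$ controls $w$ on $\{\psi\geq k_0\}$ only at $x=0$; the ``uniform parabolicity'' you then invoke to propagate forward in $x$ requires a positive lower bound on $w(x,k_0)$ for $x>0$, which is precisely (an integrated form of) the bound being proved. Moreover, the constant subsolution $k/2$ for $a=\partial_\psi w$ on $\{\psi\geq k_0\}$ cannot be correct, since $a(0,\psi)$ is only bounded below by $k$ on $[0,k_0]$ and $a(x,\psi)\to0$ as $\psi\to+\infty$ (Lemma \ref{lem:w-bound}), so lateral influence from large $\psi$ is not excluded; the same issue reappears at $\psi=\psi_1$ for the $w$-barrier, where the choice of $\psi_1$ leaves no room between $w_0(\psi_1)\geq k\psi_1$ and $\underline w(\psi_1)$ to survive positive $x$. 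To close the argument you need an \emph{independent} quantitative lower bound on $w(x,\cdot)$ at a fixed interior abscissa, for instance from the integrated identity $\partial_x\int_0^{\delta}w\varphi\,d\psi\geq -2\int_0^\delta\varphi\,d\psi+\tfrac23\int_0^\delta w^{3/2}\partial_\psi^2\varphi\,d\psi\geq -C$ (as in the proof of Theorem \ref{thm:SP}) combined with the monotonicity $w(x,\psi_1)\geq \psi_1^{-1}\int_0^{\psi_1}w\,d\psi$, after which $\mu$, $\psi_1$ and $X_0$ must be retuned so that the barrier's lateral value lies strictly below this independent bound. Finally, be aware that the ``continuation criterion'' you invoke from Proposition \ref{prop:olei} is only qualitative; to conclude existence up to a time depending solely on $k_0,k,k_1,k_2$ you must combine the a priori wall bound with Oleinik's regularity theory showing that $u(\bar x,\cdot)$ remains admissible data as $\bar x$ approaches the maximal time — otherwise you are implicitly assuming the quantitative local existence statement that is the lemma itself.
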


 In the sequel, we first consider the case of $\pa_xp=1$ so that
 \beno
 U(x)=\sqrt{2(x_0-x)}\quad \text{for some}\,\,x_0>0.
 \eeno
  We denote by $X^*$ the maximal existence time of the solution in Proposition \ref{prop:olei} and $X_*=\min(X^*, x_0)$. We say that $x_s$
 is a separation point of $u$ if $\pa_yu(x,0)\to 0$  as $x\to x_s$.
 \smallskip

 In section 6, we will extend our result to general adverse pressure $\pa_xp\ge c>0$.

\section{Boundary layer separation}\label{LS}

In this section, we prove the boundary layer separation for a large class of  data in $\cK$.

\begin{theorem}\label{thm:SP}
Fix any $\mu\in(0,1).$ Let $u$ be a solution constructed in Proposition \ref{prop:olei} with $u_0\in \cK$ satisfying
\begin{align}\label{spcdt}
\|\partial_y u_0\|_{L^\infty(0,y_0)}\leq \frac{1}{2}\epsilon_0x_0^{\frac{1}{4}},
\end{align}
where $y_0$ is determined by
$$Bx_0^{\frac{3}{4}}=\psi_0=\int_0^{y_0}u_0(z)dz, $$
and $\epsilon_0,B$ are positive constants depending only on $\mu.$
Then there exists a separation point $x_s=X^*$ with $X^*<\frac{\mu}{2}x_0$.
\end{theorem}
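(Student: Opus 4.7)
The plan is to work in the von Mises coordinates of \eqref{eq:VM}, where \eqref{eq:Prandtl-VM} reads $\partial_x w - \sqrt{w}\,\partial_\psi^2 w = -2$, and to track the manifestly non-negative weighted moment
\[
\Phi(x) \eqdef \int_0^{\psi_0}(\psi_0 - \psi)\,w(x,\psi)\,d\psi.
\]
The idea is that the adverse pressure acts as a steady drain on $\Phi$, while the smallness hypothesis \eqref{spcdt} makes $\Phi(0)$ so small that $\Phi$ would have to cross zero before $x = \tfrac{\mu}{2}x_0$. Since $\Phi \ge 0$ on the existence interval, this contradiction forces $X^* < \tfrac{\mu}{2}x_0$.

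Differentiating $\Phi$ along \eqref{eq:Prandtl-VM} and integrating by parts twice in $\psi$ --- with the boundary terms vanishing at $\psi = \psi_0$ thanks to the weight and at $\psi = 0$ because $w(x,0) = 0$ --- yields
\[
\Phi'(x) = \tfrac{2}{3}\,w(x,\psi_0)^{3/2} - \int_0^{\psi_0}(\psi_0 - \psi)\frac{(\partial_\psi w)^2}{2\sqrt{w}}\,d\psi - \psi_0^2.
\]
Dropping the non-positive dissipation and using the monotonicity of $w$ in $\psi$ from Lemma \ref{lem:w-bound} together with $w(x,\infty) = U(x)^2 = 2(x_0 - x) \le 2x_0$, I obtain
\[
\Phi'(x) \le \Bigl(\tfrac{4\sqrt{2}}{3} - B^2\Bigr) x_0^{3/2},
\]
which becomes $\le -c\,x_0^{3/2}$ with an absolute $c > 0$ once $B$ is fixed large enough (e.g.\ $B = 2$).

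For the initial datum, the chain rule $\partial_\psi w_0 = 2\,\partial_y u_0$ combined with \eqref{spcdt} gives $\partial_\psi w_0 \le \epsilon_0 x_0^{1/4}$ on $[0,\psi_0]$, hence $w_0(\psi) \le \epsilon_0 x_0^{1/4}\,\psi$ and
\[
\Phi(0) \le \epsilon_0 x_0^{1/4}\int_0^{\psi_0}(\psi_0-\psi)\psi\,d\psi = \tfrac{\epsilon_0 B^3}{6}\,x_0^{5/2}.
\]
Integrating the differential inequality over $[0,\tfrac{\mu}{2}x_0]$ yields $\Phi(\tfrac{\mu}{2}x_0) \le \bigl(\tfrac{\epsilon_0 B^3}{6} - \tfrac{c\mu}{2}\bigr)x_0^{5/2}$, which is strictly negative once $\epsilon_0$ is chosen small enough relative to $\mu$ (with $B$ already fixed); this is the desired contradiction. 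To identify the resulting $X^*$ as a separation point, I would invoke Lemma \ref{lem:extension}: if $\partial_\psi w(x,0)$ stayed bounded below as $x \to X^*$, the solution could be extended past $X^*$, contradicting maximality, so $\partial_y u(x,0) = \tfrac{1}{2}\partial_\psi w(x,0) \to 0$.

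The main technical obstacle I anticipate is justifying the integration by parts near $\psi = 0$: since $w \sim 2\,\partial_y u(x,0)\,\psi$ at the boundary, the integrand $(\partial_\psi w)^2/\sqrt{w}$ has a $\psi^{-1/2}$-singularity, integrable but borderline. I would handle this with an approximation argument --- e.g.\ replacing $\psi_0 - \psi$ by a cut-off that also vanishes in a small neighborhood of $\psi = 0$, performing the calculation in the smooth regime, and then passing to the limit using the local regularity and non-degeneracy from Proposition \ref{prop:olei} and Lemma \ref{lem:w-bound}.
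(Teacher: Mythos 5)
Your proof is correct and follows essentially the same path as the paper's: test the von Mises form \eqref{eq:Prandtl-VM} against a compactly supported weight in $\psi$, integrate in $x$, and exploit the non-negativity of the resulting weighted integral of $w$ together with the smallness of $\partial_\psi w_0$ to force the maximal interval to end before $\tfrac{\mu}{2}x_0$; the separation at $X^*$ then follows from Corollary \ref{cor:SP} exactly as in the paper. The only variation is the weight: you take the tent function $(\psi_0-\psi)_+$, whereas the paper takes a smooth cut-off $\varphi$ equal to $1$ on $[0,\delta/2]$ and $0$ past $\delta$ with $\delta=\psi_0$. Your weight has vanishing second derivative, which eliminates the $|\partial_\psi^2\varphi|\leq C/\delta^2$ estimate the paper needs, at the cost of the boundary term $\tfrac{2}{3}w(x,\psi_0)^{3/2}$ at $\psi=\psi_0$, which you control correctly via $w\leq U(x)^2\leq 2x_0$; the two bookkeeping choices are of comparable effort. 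As for the technical concern near $\psi=0$: on $[0,\bar x]\times[0,\psi_0]$ with $\bar x<X_*$, the non-degeneracy in Proposition \ref{prop:olei} and the bounds in Lemma \ref{lem:w-bound} give $\partial_\psi w$ bounded and $w(x,\psi)$ bounded below by a positive multiple of $\psi$, so $(\partial_\psi w)^2/\sqrt{w}=O(\psi^{-1/2})$ is absolutely integrable and the boundary term $(\psi_0-\psi)\sqrt{w}\,\partial_\psi w$ vanishes at $\psi=0$; hence the integration by parts is legitimate without a separate approximation step, and you drop the non-positive dissipation term anyway.
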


\begin{Remark}\label{spcrk}
For a large class of data in $\cK$, the condition \eqref{spcdt} is satisfied. Indeed, $u_0\in \cK$ implies that $\|\partial_y u_0\|_{L^\infty(0,+\infty)}<+\infty$ and $\sqrt{2x_0}=\displaystyle\lim_{y\to+\infty} u_0(y)$. Thus, given $u_0\in \cK$, we can take $x_0$ large enough compared with $\|\partial_y u_0\|_{L^\infty(0,+\infty)}$ but without requirement on the size of $\|u_0\|_{L^\infty(0,+\infty)}$ so that
\eqref{spcdt} holds.  On the other hand,  given $x_0>0$, one can find
$u_0\in \cK$ with small slope so that \eqref{spcdt}  is satisfied.
\end{Remark}

\subsection{One-side estimate on $\pa_y^2u$}

The following lemma plays an important role in this paper.

 \begin{Lemma}\label{lem:one side}
 If $\partial^2_{y}u_0\geq -C_1$ and $X^*< x_0,$ there exists a positive constant $C_2$ depending on $C_1$  so that
\begin{align*}
  \partial^2_{y}u(x,y)\geq -C_2\quad \text{in} \quad\big[0,X^*\big)\times [0,+\infty).
\end{align*}
\end{Lemma}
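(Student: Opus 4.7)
I would work in the Von Mises coordinates, where the asserted one-sided bound admits a clean maximum-principle proof. By \eqref{PvM} and \eqref{eq:Prandtl-VM} (with $\partial_x p=1$),
\[ 2\partial_y^2 u \;=\; \sqrt{w}\,\partial_\psi^2 w \;=\; \partial_x w + 2, \]
so, setting $F:=\partial_x w$, the asserted inequality $\partial_y^2 u \geq -C_2$ is equivalent to the one-sided bound $F \geq -2C_2-2$. Differentiating \eqref{eq:Prandtl-VM} in $x$ and using the identity $\sqrt{w}\,\partial_\psi^2 w = F+2$ to eliminate the resulting cross term, one arrives at the quasilinear parabolic equation
\[ \partial_x F - \sqrt{w}\,\partial_\psi^2 F \;=\; \frac{F(F+2)}{2w}, \qquad \psi>0. \]

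The parabolic-boundary data for $F$ are favourable: the hypothesis $\partial_y^2 u_0 \geq -C_1$ gives $F(0,\psi) \geq -2C_1 - 2$; the Dirichlet condition $w(x,0)=0$ forces $F(x,0) \equiv 0$; and the assumption $X^* < x_0$ keeps $U(x)=\sqrt{2(x_0-x)}$ bounded below on every sub-interval $[0,X_1]\subset[0,X^*)$, so Lemma~\ref{lem:w-bound}(3) together with $\sqrt{w}\,\partial_\psi^2 w = F+2$ yields $F(x,\psi)\to -2$ as $\psi\to\infty$.

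The heart of the argument is the sign observation that the reaction term $F(F+2)/(2w)$ is \emph{strictly positive} whenever $F<-2$ and $\psi>0$. Consequently, at any interior minimum $(x_\star,\psi_\star)$ of $F$ with $x_\star>0$ and $\psi_\star>0$, where $\partial_x F\leq 0$ and $\partial_\psi^2 F\geq 0$, the equation forces $F(F+2)\leq 0$ at that point, i.e.\ $F(x_\star,\psi_\star)\in[-2,0]$. Combining this with the boundary values yields $F \geq -\max(2C_1+2,\,2)$ on all of $[0,X^*)\times\R_+$, hence $\partial_y^2 u = (F+2)/2 \geq -C_1$; in particular one may take $C_2=C_1$ when $C_1\geq 0$ (and $C_2=0$ otherwise).

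The main technical obstacle is making the maximum principle rigorous on the non-compact, partially degenerate domain. The degeneracy of $\sqrt{w}$ at $\psi=0$ is harmless since $F(x,0)=0$ sits above any candidate violating minimum, so such a minimum must be strictly interior, where the operator is uniformly parabolic. To handle the unbounded $\psi$-direction and the possibility that the infimum is only approached, I would use that Proposition~\ref{prop:olei} yields $F = 2\partial_y^2 u - 2$ uniformly bounded on $[0,X_1]\times\R_+$, then truncate to $[0,X_1]\times[0,N]$, use the $\psi\geq 1$ bounds of Lemma~\ref{lem:w-bound} to control $F(x,N)$ close to $-2$ uniformly in $x$ for $N$ large, and apply the parabolic maximum principle on each truncation before letting $N\to\infty$. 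Equivalently, one can compare $F$ against the constant barrier $\chi\equiv -\max(2C_1+2,\,2)$, which satisfies $\partial_x\chi - \sqrt{w}\,\partial_\psi^2\chi = 0 < \chi(\chi+2)/(2w)$ and lies beneath $F$ on the parabolic boundary.
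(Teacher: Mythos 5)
Your proof is correct and follows essentially the same route as the paper: both pass to Von Mises variables, set $F=\partial_x w$ (so that $2\partial_y^2u=F+2$), derive the parabolic equation $\partial_x F-\sqrt{w}\,\partial_\psi^2 F=F(F+2)/(2w)$, check the parabolic-boundary data $F(0,\psi)\geq -2C_1-2$, $F(x,0)=0$, $F\to -2$, and exclude a too-negative interior minimum via the sign of the reaction term. Your extra remarks on truncating in $\psi$ to justify the maximum principle on the unbounded domain are a welcome bit of rigor that the paper simply asserts, but the core argument is identical.
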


\begin{proof}
By \eqref{PvM}, it suffices to show that there exists a positive constant $M$ so that
$$
\sqrt{w}\partial^2_{\psi}w\geq -M\quad \text{in} \quad[0,X^*)\times [0,+\infty).
$$
For any fixed $\bar{x}\in (0,X^*)$, we denote
$$
D_{\bar{x}}=\big\{(x,\psi)|(x,\psi)\in[0,\bar{x})\times [0,+\infty)\big\}.
$$

Now we consider the function $f=\partial_x w=\sqrt{w}\partial^2_{\psi}w-2$ in $D_{\bar x}$. Thanks to $\sqrt{w}\partial^2_{\psi}w=2\pa_y^2u$ and $\partial^2_{y}u_0\geq -C_1$, we have
\beno
f(0,\psi)=\sqrt{w}\partial^2_{\psi}w|_{x=0}-2\geq -2C_1-2.
\eeno
Due to $\pa_y^2u|_{y=0}=2$, we have
\beno
f(x,0)=0\quad \text{for}\quad x\in [0,X^*),
\eeno
and by Lemma \ref{lem:w-bound},
\begin{align*}
\displaystyle\lim_{\psi\to+\infty} f= \displaystyle\lim_{\psi\to+\infty}\sqrt{w}\partial^2_{\psi}w-2=-2\quad \text{for} \quad x\in [0,X^*).
\end{align*}
Therefore, we may assume that $f$ achieves its negative minimum on $D_{\bar x}$ at a point $z_{min} \in (0,\bar x]\times (0,+\infty)$. Then $f(z_{min})<0$ and at $(x,\psi)=z_{min}$,
\begin{align}\label{ppm}
   \partial_xf\leq 0,\quad \partial_{\psi}f=0,\quad \partial^2_{\psi}f\geq 0.
\end{align}
Taking $\pa_x$ to the equation \eqref{eq:Prandtl-VM}, we find that
\begin{align*}
\partial_xf=\sqrt{w}\partial^2_{\psi}f+\frac{f}{w}+\frac{f^2}{2w} \quad \text{in}\quad (0,X^*)\times (0,+\infty),
\end{align*}
from which and \eqref{ppm}, we infer that
\begin{align*}
\frac{f}{w}+\frac{f^2}{2w} \leq 0\quad \text{at}\quad (x,\psi)=z_{min},
\end{align*}
which gives
$$-2\leq f(z_{min})<0.$$
Since $\bar{x}$ is chosen arbitrarily, we conclude our conclusion.
\end{proof}

A direct consequence of this lemma is the following corollary.
\begin{corol}\label{cor:SP}
If $X^*<x_0,$ then it holds that
\begin{align}\label{abc}
\partial_y u(x,0)\rightarrow0\quad\text{as}\quad x\rightarrow X^*.
\end{align}
\end{corol}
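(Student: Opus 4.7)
I would argue by contradiction and use the extension lemma (Lemma \ref{lem:extension}) to deduce that the solution can be continued beyond $X^*$, contradicting the definition of $X^*$. Suppose for contradiction that \eqref{abc} fails. Then there exist $m>0$ and a sequence $x_n \nearrow X^*$ with
\[
\pa_y u(x_n,0) \geq 2m.
\]
Since $X^*<x_0$, Lemma \ref{lem:one side} applies and gives $\pa_y^2 u\geq -C_2$ throughout $[0,X^*)\times\R_+$. Integrating in $y$ starting from $y=0$,
\[
\pa_y u(x_n,y) \;\geq\; 2m - C_2 y \;\geq\; m \quad\text{for } y\in[0,y_0],\qquad y_0:=\tfrac{m}{C_2}.
\]
Hence $u(x_n,y)\geq m y$ on $[0,y_0]$, and in particular
\[
\psi_0 \;:=\; \int_0^{y_0} u(x_n,z)\,dz \;\geq\; \tfrac{m y_0^2}{2}.
\]

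Next, I would convert this pointwise lower bound into the Von Mises setting via $\pa_\psi w = 2\pa_y u$ (see \eqref{PvM}). Since $y\in[0,y_0]$ corresponds to $\psi\in[0,\psi_0]$, I get
\[
\pa_\psi w(x_n,\psi) \;\geq\; 2m \quad\text{for } \psi\in \bigl[0,\tfrac{m y_0^2}{2}\bigr].
\]
This is exactly the hypothesis \eqref{ex1dr} of Lemma \ref{lem:extension} applied to the initial data $w(x_n,\cdot)$, with $k_0=\tfrac{m y_0^2}{2}$ and $k=2m$, both independent of $n$.

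Now I would fix $x_1 \in (X^*,x_0)$ and set $k_1:=U(x_1)^2=2(x_0-x_1)>0$, $k_2=1$. Applying Lemma \ref{lem:extension} (translated so the initial time is $x_n$ rather than $0$) produces a constant $X_0>0$ depending only on $(k_0,k,k_1,k_2)$ — in particular independent of $n$ — such that the solution exists on $[x_n,x_n+X_0']\times\R_+$ with $X_0':=\min(X_0,x_1-x_n)$. For $x_n$ sufficiently close to $X^*$, one has $x_n+X_0'>X^*$: either $X_0<x_1-X^*$, in which case taking $x_n>X^*-X_0$ gives $x_n+X_0>X^*$; or $X_0\geq x_1-X^*$, in which case $x_n+X_0'=x_1>X^*$. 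Concatenating this extension with the original solution on $[0,x_n]$ yields a solution of \eqref{eq:Prandtl-VM} on $[0,x_n+X_0']$ satisfying the conditions of Proposition \ref{prop:olei}, which contradicts the maximality of $X^*$.

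The main obstacle I anticipate is purely bookkeeping: ensuring that the length $X_0$ coming from Lemma \ref{lem:extension} is uniform in $n$. The point is that by choosing $x_1\in(X^*,x_0)$ \emph{before} the sequence $x_n$, the quantities $k_1,k_2$ are fixed, and the lower bound on $\pa_\psi w$ near the wall is also uniform in $n$ thanks to the one-sided control on $\pa_y^2 u$ from Lemma \ref{lem:one side}; everything else is routine.
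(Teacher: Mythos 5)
Your argument is correct and is essentially the paper's own proof: contradiction along a sequence $x_n\to X^*$ with $\partial_y u(x_n,0)$ bounded below, the one-sided bound $\partial_y^2u\ge -C_2$ from Lemma \ref{lem:one side} to propagate that lower bound to a fixed $y$-interval, and Lemma \ref{lem:extension} with $x_n$ as initial time to extend the solution past $X^*$. Your write-up is merely more explicit than the paper's on two routine points (converting the $y$-interval bound into the $\psi$-interval hypothesis \eqref{ex1dr}, and the uniformity in $n$ of the extension length).
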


\begin{proof}
Since  $u_0$ is increasing in $y$, we have $\partial_yu(x,0)\geq 0$
for $ x\in(0,X^*).$ If \eqref{abc} does not hold, then there exists a positive constant $\epsilon_0$ so that for any $n\in \N_+,$ there exists a $x_n\in (X^*-\f1n, X^*)$ with
\beno
\partial_y u(x_n,0)\geq \epsilon_0,
\eeno
which along with Lemma \ref{lem:one side} implies that
\beno
\inf\Big\{\partial_yu(x_n,y):0\leq y\leq \frac{\epsilon_0}{2C_2}\Big\}\geq \frac{\epsilon_0}{2}.
\eeno
On the other hand, $U(x_n)^2=2(x_0-x_n)\geq 2(x_0-X^*)>0$. For any $n,$ we can take $x_n$ as an initial time. That is, by Lemma \ref{lem:extension}, there exists $\delta>0$ depending only on $\epsilon_0$ and $x_0-X^*>0$ so that $u$ can be extended to $[0,x_n+\delta]$. However, $|X^*-x_n|\rightarrow 0$ as $n\rightarrow \infty$ so that the solution can be extended after $x=X^*$ by taking $n$ big enough, which is a contradiction. Then the corollary follows.
\end{proof}

\subsection{Proof of Theorem \ref{thm:SP}}

Now we are in a position to prove Theorem \ref{thm:SP}.

\begin{proof}[Proof of Theorem \ref{spcdt}]
Take a smooth cut-off function $\varphi(\psi)$ so that
\beno
\varphi\equiv 1\quad \text{in} \,\,\big[0,\frac{\delta}{2}\big],\quad\varphi\equiv 0\quad \text{in} \,\,(\delta,+\infty),\quad0\leq \varphi\leq 1 \quad \text{in}\,\,[0,+\infty).
\eeno
Therefore,
\begin{align}\label{phi2dr}
   |\partial^2_{\psi}\varphi|\leq \frac{C}{\delta^2},
\end{align}
where we take $\delta=x_0^{\frac{3}{4}}B$ with $B$ large to be determined.

First of all, we have $\partial_\psi w\geq 0$ and
\begin{align}\label{upbdw}
   0\leq (w(x,\psi))^{\frac{3}{2}}\leq (2(x_0-x))^{\frac{3}{2}}\leq Cx_0^{\frac{3}{2}},\quad \text{in} \,\,[0,X_*)\times [0,+\infty).
\end{align}
We denote
$$
D_\mu=\Big\{(x,\psi):(x,\psi)\in\big[0,(\mu+(1-\mu)\frac{99}{100})X_*\big]\times[0,+\infty)\Big\}.
$$
By \eqref{eq:Prandtl-VM}, we have
$$
\partial_x\int_0^{\delta}w\varphi d\psi-\int_0^{\delta}\sqrt{w}\partial^2_{\psi}w\varphi d\psi=-2\int_0^{\delta}\varphi d\psi.
$$
Thanks to $\sqrt{w}\partial_{\psi}w(x,0)=0$, we get by integration by parts that
$$\partial_x\int_0^{\delta}w\varphi d\psi+\int_0^{\delta}\sqrt{w}\partial_{\psi}(\sqrt{w})^2\partial_{\psi}\varphi d\psi+\int_0^{\delta}2\sqrt{w}(\partial_{\psi}\sqrt{w})^2\varphi d\psi=-2\int_0^{\delta}\varphi d\psi.$$
Using the facts that
\begin{align*}
  &\int_0^{\delta}2\sqrt{w}(\partial_{\psi}\sqrt{w})^2\varphi d\psi\geq 0,\\
  &\int_0^{\delta}\sqrt{w}\partial_{\psi}(\sqrt{w})^2\partial_{\psi}\varphi d\psi=-\frac{2}{3}\int_0^{\delta}w^{\frac{3}{2}}\partial^2_{\psi}\varphi d\psi,
\end{align*}
we infer from  \eqref{phi2dr} and \eqref{upbdw}  that
$$
0\leq \int_0^{\delta}w\varphi d\psi\leq \int_0^{\delta}w_0\varphi d\psi-2x\int_0^{\delta}\varphi d\psi+\frac{2}{3}\int_0^{\delta}d\psi x_0^{\frac{3}{2}}\frac{C}{\delta^2}x_0.
$$
This shows that
\begin{align*}
\delta x\leq 2x\int_0^{\delta}\varphi d\psi \leq \frac{\delta^2}{2}\|\partial_\psi w_0 \|_{L^\infty(0,\delta)}+C\frac{x_0^{\frac{5}{2}}}{\delta},
\end{align*}
where we used $w_0(0)=0$ and $2\partial_y u_0=\partial_\psi w_0$. Hence,
\beno
x\leq \frac{\delta}{2}\|\partial_\psi w_0 \|_{L^\infty(0,\delta)}+C\frac{x_0^{\frac{5}{2}}}{\delta^2}.
\eeno
By \eqref{spcdt} and $2\partial_y u_0=\partial_\psi w_0 $, we have \begin{align*}
\|\partial_\psi w_0\|_{L^\infty(0,\psi_0)}\leq \epsilon_0x_0^{\frac{1}{4}},
\end{align*}
where $\psi_0=Bx_0^{\frac{3}{4}}=\delta.$
Therefore, it holds that
\ben\label{eq:x-est1}
x\leq \frac{Bx_0^{\frac{3}{4}}}{2}\epsilon_0x_0^{\frac{1}{4}}+C \frac{x_0}{B^2}=\frac{B}{2}\epsilon_0x_0+C\frac{x_0}{B^2}.
\een
Now we first take $B$ large enough so that
$$ \frac{C}{B^2}\leq \frac{\mu^2}{4},$$
and then take $\epsilon_0$ small enough so that
$$\frac{B}{2}\epsilon_0\leq \frac{\mu^2}{4}.
$$
Then we deduce from \eqref{eq:x-est1} that $x\leq  \frac{\mu^2}{2}x_0$,
which implies
\beno
\mu X_*<\Big(\mu+(1-\mu)\frac{99}{100}\Big)X_*\le  \frac{\mu^2}{2}x_0.
\eeno
That is, $X_*\leq  \frac{\mu}{2}x_0.$ By the definition of $X_*,$ we have
$$\min\{X^*,x_0\}=X_*\leq\frac{\mu}{2}x_0.$$
Therefore, $X^*\leq\frac{\mu}{2}x_0.$ Then the theorem follows from
Corollary \ref{cor:SP}.
\end{proof}

\section{Goldstein's hypothesis on the separation rate}\label{AB}

In this section, we confirm Goldstein's hypothesis concerning the separation rate of boundary layer.

\begin{theorem}\label{thm:lower}
Let $u$ be a solution constructed in Proposition \ref{prop:olei} with
$X^*<x_0$. Then there exists a positive constant $C$ so that
\begin{align}\label{1/4}
\partial_yu(x,0)\leq C\big(X^*-x\big)^{\frac{1}{4}} \quad \text{for}\quad x\in \big(x_{near},X^*\big),
\end{align}
for some $x_{near}$ close enough to $X^*$ determined in Lemma \ref{e1/4}.
\end{theorem}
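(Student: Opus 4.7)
The plan is a contradiction argument via solution extension. I would show that if $k := \partial_y u(x_1, 0) > C(X^* - x_1)^{1/4}$ at some $x_1$ arbitrarily close to $X^*$, then the solution can be continued strictly beyond $X^*$, contradicting the maximality of the separation point; this forces the desired upper bound once $C$ is chosen correctly.

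The first step is to convert the pointwise slope $k$ at $\psi = 0$ into a quantitative lower bound on $\partial_\psi w(x_1, \cdot)$ on a neighborhood whose size scales like $k^3$. Using the one-sided bound $\partial_y^2 u \ge -C_2$ from Lemma \ref{lem:one side}, integration in $y$ gives $\partial_y u(x_1, y) \ge k - C_2 y \ge k/2$ on $[0, y_*]$ with $y_* = k/(2C_2)$, and hence $u(x_1, y) \ge ky/2$ there. Passing to the von Mises variable, $\psi_* := \int_0^{y_*} u(x_1, z)\,dz \ge c_0 k^3$ with $c_0 = 1/(16 C_2^2)$, and correspondingly $\partial_\psi w(x_1, \psi) = 2\partial_y u \ge k$ on $[0, c_0 k^3]$. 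The restarted profile $w(x_1, \cdot)$ inherits the required regularity from Proposition \ref{prop:olei} and Lemma \ref{lem:w-bound}, while $k_1 \ge 2(x_0 - X^*) > 0$ and $k_2 = 1$ are bounded uniformly in $x_1$.

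Applying the extension Lemma \ref{lem:extension} at $x_1$ with $k_0 = c_0 k^3$ and lower bound $k$ produces an extension time $X_0 = X_0(c_0 k^3, k, k_1, k_2)$. The main obstacle is upgrading this to the quantitative bound $X_0 \ge c_* k^4$ with $c_*$ independent of $k$, which I expect is the content of Lemma \ref{e1/4} and pins down $x_{near}$. The natural way to see the $k^4$ scaling is the invariance of \eqref{eq:Prandtl-VM} under $(x, \psi, w) \mapsto (\lambda x, \lambda^{3/4}\psi, \lambda w)$, which preserves the source $-2\partial_x p = -2$: the choice $\lambda = k^4$ sends the restarted data to $\partial_\psi \tilde w(0, \cdot) \ge 2$ on $[0, c_0]$, and the rescaled outer parameter $\tilde k_1 = k_1/k^4$ diverges as $k \downarrow 0$, placing the rescaled problem in the uniform existence regime for Lemma \ref{lem:extension} and yielding an extension time of order unity, i.e.\ $c_* k^4$ in the original variables.

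With the estimate $X_0 \ge c_* k^4$ in hand, set $C := c_*^{-1/4}$ and take $x_{near}$ close enough to $X^*$ that $k \le (2(x_0 - X^*))^{1/4}$ on $(x_{near}, X^*)$ (possible by Corollary \ref{cor:SP}, which guarantees $k \to 0$). If $k > C(X^* - x_1)^{1/4}$ for such an $x_1$, then $X_0 > X^* - x_1$, so the Prandtl solution extends strictly past $X^*$, contradicting the maximality of $X^*$. This yields $\partial_y u(x, 0) \le C(X^* - x)^{1/4}$ for $x \in (x_{near}, X^*)$.
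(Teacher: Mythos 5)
Your argument is correct and reaches the bound by a route that reorganizes the same ingredients the paper uses, but in a genuinely different order. The paper first proves Lemma \ref{e1/4} \emph{without} the one-sided bound: it defines $\mu(x)$ through the self-referential relation $\mu=\min_{0\le y\le\mu^{1/4}}(\partial_yu)^4(x,y)$, rescales by $\mu$ so that the rescaled data automatically satisfies $\partial_y(\widetilde u_\mu)_0\ge 1$ on $[0,1]$, and applies Lemma \ref{lem:extension} to get $\mu(x)\le C(X^*-x)$; only afterwards does it invoke Lemma \ref{lem:one side} to transfer the resulting estimate on $\min_{0\le y\le\mu^{1/4}}\partial_yu$ back to $y=0$ (via the set $T_x$ and the mean value theorem). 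You instead invoke Lemma \ref{lem:one side} at the front end, converting the slope $k$ at $y=0$ directly into $\partial_\psi w\ge k$ on an interval of length $c_0k^3$, and then run the identical scaling $(x,\psi,w)\mapsto(\lambda x,\lambda^{3/4}\psi,\lambda w)$ with $\lambda=k^4$ plus Lemma \ref{lem:extension} to get an extension time $\gtrsim k^4$. This is more direct and bypasses Lemma \ref{e1/4} entirely; what it gives up is the by-product of that lemma, namely the interior curve $y(x)$ with $\min_{0\le y\le\mu(x)^{1/4}}\partial_yu=\mu(x)^{1/4}$, which the paper reuses in Section 5. Two small points to be aware of, neither of which is a real gap since the paper makes the same implicit use of Lemma \ref{lem:extension}: (i) you need the extension time $X_0$ to be uniform as the rescaled parameter $\tilde k_1=k_1/k^4\to\infty$ with $k_0,k,k_2$ fixed (monotonicity in $k_1$), exactly as the paper assumes when it says $B$ depends only on $x_0-X^*$; and (ii) your rescaled data has $\partial_{\tilde\psi}\tilde w_0\ge 1$ rather than $\ge 2$ on $[0,c_0]$ (since $\partial_{\tilde\psi}\tilde w=\lambda^{-1/4}\partial_\psi w=k^{-1}\partial_\psi w$), which of course changes nothing.
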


The idea is that we find a curve $(x,y(x))$ such that $\partial_yu(x,y(x)) \leq C\big(X^*-x\big)^{\frac{1}{4}}$, then
using one-side estimate in Lemma \ref{lem:one side}, we can deduce the  separation rate of $\partial_yu(x,0).$

\begin{lemma}\label{e1/4}If $X^*<x_0,$ then there exists a positive function $\mu(x)$ defined on $\big(x_{near},X^*\big)$ so that for all $ x\in\big(x_{near},X^*\big),$
\begin{align}
    \begin{split}
       0 &< \mu(x)< C_3\big(X^*-x\big)^{\frac{1}{4}},\\
        0& <\displaystyle\min_{0\leq y\leq \mu(x)^{\frac{1}{4}}}\partial_yu(x,y) \leq C_3\big(X^*-x\big)^{\frac{1}{4}},
     \end{split}
\end{align}
where $C_3$ is a positive constant and $x_{near}$ is any fixed point close to $x^*$ so that $\partial_yu(x,0)\leq \f12$ for  $x\in\big(x_{near},X^*\big).$
 \end{lemma}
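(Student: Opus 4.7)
My plan is to work in the Von Mises variables and prove the statement by contradiction, invoking the extension lemma (Lemma \ref{lem:extension}). Recalling $2\partial_yu=\partial_\psi w$, the claim amounts to finding, at each $x$ close to $X^*$, a point $\psi^*(x)>0$ at which $\partial_\psi w(x,\psi^*(x))\le 2C_3(X^*-x)^{1/4}$, with $\psi^*(x)$ lying in a controlled $\psi$-interval; one then takes $\mu(x)$ so that the corresponding $y$-coordinate $y^*(x)$ satisfies $y^*(x)\le\mu(x)^{1/4}$ and $\mu(x)\in(0,C_3(X^*-x)^{1/4})$. The strict positivity of both $\mu(x)$ and of the minimum in the conclusion is essentially free: it follows from $\partial_\psi w(x,\cdot)>0$ for $\psi>0$ whenever $x<X^*$ (Proposition \ref{prop:olei} together with Lemma \ref{lem:w-bound}).

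Suppose, for contradiction, that at some $x\in(x_{near},X^*)$ no admissible $\mu(x)$ exists. Since the minimum is automatically strictly positive, the failure must be that $\min_{0\le y\le\mu(x)^{1/4}}\partial_yu(x,y)>C_3(X^*-x)^{1/4}$ for every $\mu(x)\in(0,C_3(X^*-x)^{1/4})$. Letting $\mu(x)\nearrow C_3(X^*-x)^{1/4}$, this forces $\partial_yu(x,y)>C_3(X^*-x)^{1/4}$ on the interval $[0,y_1(x)]$ with $y_1(x):=C_3^{1/4}(X^*-x)^{1/16}$. Integrating once, $u(x,y)\ge C_3(X^*-x)^{1/4}y$ on the same interval; integrating once more, $\psi(x,y)\ge \frac{C_3}{2}(X^*-x)^{1/4}y^2$. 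Evaluating at $y=y_1(x)$ produces $\psi_1(x):=\psi(x,y_1(x))\ge \frac{1}{2}C_3^{3/2}(X^*-x)^{3/8}$, and converting back through $\partial_\psi w=2\partial_yu$ yields $\partial_\psi w(x,\psi)>2C_3(X^*-x)^{1/4}$ for every $\psi\in[0,\psi_1(x)]$.

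Taking $x$ as the initial time, Lemma \ref{lem:extension} now applies with $k_0=\psi_1(x)\gtrsim C_3^{3/2}(X^*-x)^{3/8}$, $k=2C_3(X^*-x)^{1/4}$, $k_1=\min U^2\ge 2(x_0-X^*)>0$ (here the standing hypothesis $X^*<x_0$ is essential), and $k_2=|p'|=1$. The lemma produces an extension of the solution on $[x,x+X_0]$ for some $X_0=X_0(k_0,k,k_1,k_2)>0$. To close the argument I need $X_0>X^*-x$. The Prandtl scaling $x\mapsto\lambda x,\ \psi\mapsto\lambda^{3/4}\psi,\ w\mapsto\lambda w$ leaves \eqref{eq:Prandtl-VM} invariant (because $\partial_xp\equiv 1$); choosing $\lambda=X^*-x$ rescales $(k,k_0)$ to an order-one profile with multiplicative constants growing as a positive power of $C_3$. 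Tracking these constants through the barrier construction underlying Lemma \ref{lem:extension} (originally from Oleinik, cf.\ \cite{Olei,MS}) gives $X_0\ge c(C_3)(X^*-x)$ with $c(C_3)\to\infty$ as $C_3\to\infty$; fixing $C_3$ sufficiently large therefore forces $X_0>X^*-x$, so the solution can be continued beyond $X^*$, contradicting the maximality of $X^*$.

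The only genuinely delicate step is this last one: turning the Prandtl-scaling heuristic into a quantitative lower bound on the extension time in Lemma \ref{lem:extension} by extracting an explicit dependence of $X_0$ on $(k_0,k,k_1,k_2)$. The remaining ingredients---the strict positivity of $\partial_\psi w$ off the boundary, the integration-of-$\partial_yu$ bounds used to pass to $\psi$-coordinates, and the choice of $x_{near}$ close enough to $X^*$ so that Corollary \ref{cor:SP} guarantees $\partial_yu(x,0)\le 1/2$ on $(x_{near},X^*)$---are routine consequences of Proposition \ref{prop:olei}, Lemma \ref{lem:w-bound}, and the preceding section.
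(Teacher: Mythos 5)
Your overall strategy---Von Mises variables, the Prandtl rescaling $x\mapsto\lambda x$, $y\mapsto\lambda^{1/4}y$, and an appeal to the extension lemma---is the same as the paper's, but the specific rescaling you choose leads to a genuine gap that the paper circumvents with a cleverer choice.

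You rescale by $\lambda=X^*-x$ and try to close a contradiction by showing that the extension time $X_0$ from Lemma~\ref{lem:extension} exceeds $X^*-x$. After rescaling, the slope constant becomes $k'=2C_3$ and the $\psi$-interval constant $k_0'$ blows up, so you need $X_0'>1$ for the rescaled data. As you yourself note, this requires knowing \emph{how} $X_0$ in Lemma~\ref{lem:extension} depends on $(k_0,k,k_1,k_2)$---in particular that $X_0\to\infty$ as $k\to\infty$ or $k_0\to\infty$. The lemma as stated gives only existence of some $X_0>0$; no monotonicity or blow-up rate is asserted, and extracting one would require reworking Oleinik's barrier construction. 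That step is the heart of the matter, and it is not carried out.

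The paper avoids this entirely by \emph{not} rescaling by $X^*-x$. Instead it defines $\mu(x)$ as the supremum of the set $I_x=\{v: v\leq\min_{0\le y\le v^{1/4}}(\partial_yu)^4(x,y)\}$, which forces the fixed-point identity $\mu(x)=\min_{0\le y\le\mu(x)^{1/4}}(\partial_yu)^4(x,y)$. Rescaling by $\lambda=\mu(x)$ (not $X^*-x$) then normalizes the initial slope of the rescaled solution to be exactly $\geq 1$ on $y\in[0,1]$, and $\widetilde U_\mu(0)^2\geq 2(x_0-X^*)$ since $\mu\le1$. So Lemma~\ref{lem:extension} applies with \emph{universal} constants $k_0,k,k_2$ and $k_1$ depending only on $x_0-X^*$, yielding a fixed $B>0$ with $A_\mu\geq B$. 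Combined with the trivial constraint $\mu A_\mu+x\leq X^*$ this gives $\mu(x)\leq(X^*-x)/B$ directly---no contradiction, no quantitative dependence of $X_0$ on its parameters, and the target estimate $\min_{0\le y\le\mu^{1/4}}\partial_yu=\mu^{1/4}\le B^{-1/4}(X^*-x)^{1/4}$ falls out by the fixed-point identity. The missing idea in your proposal is precisely this fixed-point normalization of the rescaling parameter: it is what makes Lemma~\ref{lem:extension} usable as a black box.

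Two smaller remarks. First, you assert $\partial_\psi w(x,\cdot)>0$ for $\psi>0$; Lemma~\ref{lem:w-bound} only gives $\partial_\psi w\ge0$, and $\partial_\psi w\to0$ as $\psi\to\infty$, so strict positivity away from the boundary is not immediate for large $\psi$. The paper obtains positivity of $\mu(x)$ more economically: since $\partial_yu(x,0)>0$ and $v\mapsto\min_{0\le y\le v^{1/4}}(\partial_yu)^4$ is continuous, small positive $v$ belong to $I_x$, so $\mu(x)>0$ and then the fixed-point identity yields positivity of the minimum. Second, even granting your scaling heuristic, a contradiction argument is an unnecessary detour: the paper's direct derivation of $\mu(x)\le(X^*-x)/B$ is the cleaner route.
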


 \begin{Remark}
 Due to Corollary \ref{cor:SP}, $x_{near}$ is well defined.
 \end{Remark}

\begin{proof}
For any $x\in \big(x_{near},X^*\big)$, we define
$$
I_x=\big\{v|v\leq\displaystyle\min_{0\leq y\leq v^{\frac{1}{4}}}(\partial_yu)^4(x,y)\big\}.
$$
Then $0\in I_x.$ Thanks to the choice of $x_{near}$, we have
\begin{align}\label{mu<1}
 v\leq\displaystyle\min_{0\leq y\leq v^{\frac{1}{4}}}(\partial_yu)^4(x,y)\leq (\partial_yu)^4(x,0)\leq \frac{1}{2^4}<1.
\end{align}
Therefore, $I_x$ is  nonempty and bounded. Thus, we can take
$$\mu(x)=\sup_{y\in I_x} y,$$
which is bounded and positive, since $0<(\partial_yu(x,0))^4$ for $x<X^*$.
Moreover, we have
 \begin{align}\label{=}
 \mu(x)=\displaystyle\min_{0\leq y\leq (\mu(x))^{\frac{1}{4}}}(\partial_yu)^4(x,y),
\end{align}
which implies that $\displaystyle\min_{0\leq y\leq (\mu(x))^{\frac{1}{4}}}(\partial_yu)^4(x,y)>0.$ Indeed, \eqref{=} follows from the fact that if $g(y)$ is continuous in $y,$ then $\displaystyle\min_{0\leq y\leq \alpha}g(y)$ is continuous with respect to $\alpha.$

For any $x_1\in\big(x_{near},X^*\big),$ we introduce
$$
\widetilde{u}(\tilde{x},y)=u(x_1+\tilde{x},y),\quad \tilde{x}\in\big[0,X^*-x_1\big).
$$
Then $\widetilde{u}$ is a solution to \eqref{eq:Prandtl} with
\beno
\widetilde{u}_0(y)=u(x_1,y)\quad \text{and} \quad\widetilde{U}(\tilde{x})=\sqrt{2(x_0-x_1-\tilde{x})}.
\eeno
Let $\mu=\mu(x_1)$ for convenience and denote
\beno
\widetilde{u}_\mu(\tilde{x},y)=\frac{1}{\sqrt{\mu}}\widetilde{u}(\mu\tilde{x},\mu^{\frac{1}{4}}y),\quad \tilde{x}\in\Big[0,\frac{X^*-x_1}{\mu}\Big).
\eeno
Then $\widetilde{u}_\mu$ is a solution of \eqref{eq:Prandtl} with \begin{align}\label{mubdy}
(\widetilde{u}_\mu)_0(y)=\frac{1}{\sqrt{\mu}}u(x_1,\mu^{\frac{1}{4}}y)\quad \text{and}\quad \widetilde{U}_\mu(\tilde{x})=\sqrt{2\Big(\frac{x_0-x_1}{\mu}-\tilde{x}\Big)}.
\end{align}
We denote by $A_\mu$ the maximal existence time of $\widetilde{u}_\mu$. Then  we have
\begin{align}\label{upb}
    \mu A_\mu+x_1\leq X^*.
\end{align}
On the other hand, by \eqref{=} and \eqref{mubdy}, we have
\beno
\partial_y(\widetilde{u}_\mu)_0(y)=\frac{1}{\mu^{\frac{1}{4}}}\pa_y u(x_1,\mu^{\frac{1}{4}}y)=\frac{\pa_y u(x_1,\mu^{\frac{1}{4}}y)}{\displaystyle\min_{0\leq y\leq \mu^{\frac{1}{4}}}\partial_y u(x_1,y)}\geq 1\quad y\in[0,1],
\eeno
and
\beno
\widetilde{U}_\mu(0)\geq\sqrt{2\Big(\frac{x_0-x_1}{\mu}\Big)}>\sqrt{2(x_0-X^*)},
\eeno
where we used $\mu\le 1$ due to \eqref{mu<1}.
Then  Lemma \ref{lem:extension}  ensures that there exists a positive constant $B$ depending only on $x_0-X^*$ so that $(\widetilde{u}_\mu)(\tilde{x},y)$ can be extended after $\tilde{x}=B$. Then $B\leq A_\mu$ and hence by \eqref{upb}, we have
\beno
\mu(x_1)=\mu\leq\frac{1}{B}\big(X^*-x_1\big).
\eeno

Finally, since $x_1$ is chosen arbitrarily and $B$ depends only on $x_0-X^*$, we have $\mu(x)\leq\frac{1}{B}\big(X^*-x\big).$ Then by \eqref{=}, we get
\beno
\displaystyle\min_{0\leq y\leq(\mu(x))^{\frac{1}{4}}}\partial_y u(x,y)=(\mu(x))^{\frac{1}{4}} \leq C_3\big(X^*-x\big)^{\frac{1}{4}}.
\eeno
This completes the proof of the lemma.
\end{proof}

Now we prove Theorem \ref{thm:lower}.

\begin{proof}
For any $x\in\big(x_{near},X^*\big)$, we define
$$
T_x=\big\{v|\displaystyle\min_{0\leq y\leq v}\partial_yu(x,y)=\frac{1}{2}\partial_yu(x,0)\big\}.
$$
Using the fact that for $x\in\big(x_{near},X^*\big),$
\beno
\displaystyle\lim_{ v\rightarrow +\infty }\displaystyle\min_{0\leq y\leq v}\partial_yu(x,y)=0<\frac{1}{2}\partial_yu(x,0),
\eeno
and $\displaystyle\min_{0\leq y\leq v}\partial_yu(x,y)$ is continuous with respect to $v$, we deduce that the set $T_x$ is nonempty and bounded.
We set $v(x)=\sup_{y\in T_x}y>0$.

First of all, if $v(x)\geq (\mu(x))^{\frac{1}{4}}, $ then we get by Lemma \ref{e1/4} that
$$\partial_yu(x,0)=2\displaystyle\min_{0\leq y\leq v(x)}\partial_yu(x,y)\leq2\displaystyle\min_{0\leq y\leq (\mu(x))^\frac{1}{4}}\partial_yu(x,y)\leq 2C_3\big(X^*-x\big)^\frac{1}{4}.$$
While, if  $v(x)< (\mu(x))^{\frac{1}{4}}<C_3\big(X^*-x\big)^\frac{1}{4}$, then we have
$$\partial_yu(x,0)\leq 3C_2C_3\big(X^*-x\big)^\frac{1}{4}.$$
Indeed, if $\partial_yu(x,0)> 3C_2C_3\big(X^*-x\big)^\frac{1}{4}$,
then there exists $\xi_x\in(0,v(x))\subset\big(0,C_3\big(X^*-x\big)^\frac{1}{4}\big)$ so that
\beno
\partial_{y}^2u(x,\xi_x)=\frac {-\f12\pa_yu(x,0)} {v(x)}\leq-\frac{ \frac{1}{2}\partial_yu(x,0)}{C_3(X^*-x)^\frac{1}{4}}\leq -\frac{3}{2}C_2,
\eeno
which contradicts with Lemma \ref{lem:one side}.
\end{proof}

\section{Local behavior near the separation}
\label{ABl}

In this section, we further study the local behavior of the solution near the separation. The following theorem gives a partial answer to open problem 5 proposed by Oleinik and Samokin (P.501, \cite{Olei}):\smallskip

{\it 
\no It would be interesting to study the local structure of the solution 
of the Prandtl system in the vicinity of the separation point.
}

 \begin{theorem}\label{thm:behavior}
 Let $u$ be a solution constructed in Proposition \ref{prop:olei}. If $u$ satisfies $\partial_{y}^2u\leq M_0$ in $[0,X^*)\times \mathbf{R}_+$, then for any $\bar{x}<X^*<x_0$,
there exist a point $(\tilde{x},\psi_{\tilde{x}})\in [\bar{x},X^*)\times\big[0,(X^*-\tilde{x})^{\frac{3}{4}}\big)$ and two positive constants $c,C$ independent of the choice of $\bar{x}$ so that
\begin{align}\label{km24.1}
C(X^*-\tilde{x})^{\frac{1}{4}}
\geq\partial_\psi w(\tilde{x},\psi_{\tilde{x}})\geq c(X^*-\tilde{x})^{\frac{1}{4}}.
\end{align}\end{theorem}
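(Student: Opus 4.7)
The plan is to argue by contradiction at the Goldstein scale $\delta(x):=(X^*-x)^{3/4}$, using an energy identity for
\[
H(x):=\int_0^{\delta(x)} w(x,\psi)\,d\psi,
\]
and then close the argument with an intermediate value application combined with Theorem~\ref{thm:lower}. Fix a constant $c_*>0$ to be pinned down, and suppose for contradiction that
\[
\partial_\psi w(x,\psi)\le c_*(X^*-x)^{1/4}\qquad\text{for all } \bar x\le x<X^*,\ 0\le\psi\le\delta(x).
\]
Integrating in $\psi$ from $w(x,0)=0$ yields $w(x,\psi)\le c_*(X^*-x)^{1/4}\psi$ on the same region. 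This gives both the endpoint bound $w(x,\delta(x))\le c_*(X^*-x)$ and, at $x=\bar x$, the initial estimate
\[
H(\bar x)\le\frac{c_*}{2}(X^*-\bar x)^{7/4}.
\]

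For the competing lower bound, differentiate $H$, use \eqref{eq:Prandtl-VM} and integrate by parts, noting that the contribution at $\psi=0$ vanishes because $w(x,0)=0$:
\[
H'(x)=\delta'(x)w(x,\delta(x))+\sqrt{w(x,\delta(x))}\,\partial_\psi w(x,\delta(x))-\frac{1}{2}\int_0^{\delta(x)}\frac{(\partial_\psi w)^2}{\sqrt{w}}\,d\psi-2\delta(x).
\]
The first and third terms are non-positive; the contradiction hypothesis bounds the second by $\sqrt{c_*(X^*-x)}\cdot c_*(X^*-x)^{1/4}=c_*^{3/2}(X^*-x)^{3/4}$. Hence $H'(x)\le(c_*^{3/2}-2)(X^*-x)^{3/4}$. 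Because $w\le U(x)^2\le 2x_0$ is uniformly bounded while $\delta(x)\to 0$, one has $\lim_{x\to X^*}H(x)=0$. Integrating from $\bar x$ to $X^*$,
\[
H(\bar x)\ge\frac{4}{7}(2-c_*^{3/2})(X^*-\bar x)^{7/4}.
\]
Combined with the upper bound this forces $7c_*+8c_*^{3/2}\ge 16$, which fails at $c_*=1$ (giving $15\ge 16$). The contradiction delivers some $\tilde x\in[\bar x,X^*)$ and $\psi_0\in[0,(X^*-\tilde x)^{3/4}]$ with $\partial_\psi w(\tilde x,\psi_0)>(X^*-\tilde x)^{1/4}$.

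To produce the desired $\psi_{\tilde x}$, combine this with Theorem~\ref{thm:lower}: $\partial_\psi w(\tilde x,0)=2\partial_y u(\tilde x,0)\le 2C_0(X^*-\tilde x)^{1/4}$, where $C_0$ denotes the constant from that theorem. If $\partial_\psi w(\tilde x,0)\ge(X^*-\tilde x)^{1/4}$, set $\psi_{\tilde x}=0$. Otherwise continuity of $\partial_\psi w(\tilde x,\cdot)$ together with the strict inequality at $\psi_0$ lets the intermediate value theorem produce $\psi_{\tilde x}\in(0,\psi_0)\subset[0,(X^*-\tilde x)^{3/4})$ with $\partial_\psi w(\tilde x,\psi_{\tilde x})=(X^*-\tilde x)^{1/4}$. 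In either case \eqref{km24.1} holds with $c=1$ and $C=\max(1,2C_0)$, both independent of $\bar x$. The main obstacle is recognizing the exact scale $\delta(x)=(X^*-x)^{3/4}$: at this Goldstein scale the nonlinear boundary flux $\sqrt{w(\delta)}\,\partial_\psi w(\delta)$ and the forcing $-2\delta(x)$ match in size, producing the clean numerical inequality $7c_*+8c_*^{3/2}<16$ at the threshold $c_*=1$ — any smaller choice of $\delta$ loses the forcing, while any larger one inflates the initial bound on $H(\bar x)$ beyond the competing lower bound.
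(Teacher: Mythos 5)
Your proposal is correct, and while the first half follows the same mechanism as the paper, the second half is genuinely different and in fact slightly stronger. For the lower bound your computation of $H'(x)$ with the moving endpoint $\delta(x)=(X^*-x)^{3/4}$ is the same idea as the paper's Lemma~\ref{lem:lower} (with $k=\tfrac34$, $m=\tfrac13$): there the localization is done with a smooth cut-off $\bar\varphi(\psi/\delta(x))$, the diffusion term is controlled through $\int w^{3/2}\partial_\psi^2\bar\varphi$, and the smallness hypothesis enters as $w\le \epsilon\,\delta^m\psi$; your sharp cut-off replaces that by the boundary flux $\sqrt{w(\delta)}\,\partial_\psi w(\delta)\le c_*^{3/2}(X^*-x)^{3/4}$, and the numerics ($7c_*+8c_*^{3/2}<16$ at $c_*=1$ versus the paper's $\tfrac{2}{100}$ margins) play the same role. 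The real divergence is in the upper bound. The paper keeps the point produced by the contradiction argument and, when $\psi_{\tilde x}>0$, proves $\psi_{\tilde x}\ge c(\partial_\psi w(\tilde x,\psi_{\tilde x}))^3$ via the two-sided curvature bound $-M_1\le\partial_y^2u\le M_1$ — this is exactly where the hypothesis $\partial_y^2u\le M_0$ (and Proposition~\ref{prop:uyy-upper}) is needed. You instead discard the original point and use the intermediate value theorem on $\partial_\psi w(\tilde x,\cdot)=2\partial_yu(\tilde x,y(\cdot))$ between $\psi=0$ (where Theorem~\ref{thm:lower} gives the upper bound) and $\psi_0$ (where the lower bound is strict), landing on a point where $\partial_\psi w$ equals $(X^*-\tilde x)^{1/4}$ exactly. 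This buys you the conclusion without ever invoking $\partial_y^2u\le M_0$, at the price of giving less information about where on the curve $\psi=O((X^*-x)^{3/4})$ the comparable slope occurs. Two small housekeeping points: to invoke Theorem~\ref{thm:lower} you should replace $\bar x$ by $\max(\bar x,x_{near})$ at the outset so that $\tilde x\in(x_{near},X^*)$ (the paper's proof has the same implicit reduction); and the integration by parts producing $-\tfrac12\int_0^{\delta}(\partial_\psi w)^2/\sqrt{w}\,d\psi$ is legitimate because $\sqrt{w}\,\partial_\psi^2w=2\partial_y^2u$ is bounded for $x\le\bar X<X^*$ and $w\sim 2\partial_yu(x,0)\psi$ near $\psi=0$, so both the boundary term at $\psi=0$ and the singular integrand $\psi^{-1/2}$ are under control — worth a sentence in a final write-up.
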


In Proposition \ref{prop:uyy-upper}, we will provide a sufficient condition ensuring that  $\partial_{y}^2u\leq M_0$ in $[0,X^*)\times \mathbf{R}_+$ holds. However, we don't need the assumption $\pa_y^2u\le M_0$ in the following lemma.

\begin{lemma}\label{lem:lower}
Let
\begin{align}\label{km1}
   k\geq \frac{1}{m+1},\quad m\geq \frac{1}{3},\quad \delta(x)=(X^*-x)^k.\end{align}
Then it holds that for any $\bar{x}<X^*< x_0$,
there exists a point $(\tilde{x},\psi_{\tilde{x}})\in[\bar{x},X^*)\times[0,\delta(x))$ and a positive constant $c$ independent of the choice of $\bar{x}$ such that
\begin{align}\label{km2}
\partial_\psi w(\tilde{x},\psi_{\tilde{x}})\geq c(X^*-\tilde{x})^{km}.
\end{align}
\end{lemma}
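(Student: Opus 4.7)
The plan is to proceed by contradiction: assume that for some small $c_0>0$ (to be chosen below),
\beno
\pa_\psi w(s,\psi)<c_0(X^*-s)^{km}\quad\text{for every }(s,\psi)\in[\bar x,X^*)\times[0,\delta(s)),
\eeno
and derive a contradiction from an energy identity for $w$. Since $w(s,0)=0$, integrating this assumption in $\psi$ yields the pointwise bound
\beno
w(s,\psi)\le c_0(X^*-s)^{km}\psi\le c_0(X^*-s)^{k(m+1)}\quad\text{on the same region.}
\eeno

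\textbf{Energy estimate.}
Set $L:=X^*-\bar x$, $x_1:=\bar x+L/2$, and $\delta_0:=(L/2)^k=\delta(x_1)$. Since $\delta_0\le\delta(s)$ for every $s\in[\bar x,x_1]$, the upper bound on $w$ holds on $[0,\delta_0]$ throughout this time interval. Take a smooth cutoff $\varphi$ with $\varphi\equiv 1$ on $[0,\delta_0/2]$, $\mathrm{supp}\,\varphi\subset[0,\delta_0]$, $|\pa_\psi^2\varphi|\le C\delta_0^{-2}$, exactly as in the proof of Theorem \ref{thm:SP}. Multiplying \eqref{eq:Prandtl-VM} by $\varphi$, integrating in $\psi$, and performing the same integration by parts as there (the boundary terms vanish because $w(\cdot,0)=0$ and $\varphi$ has compact support, and the nonnegative dissipation $\int 2\sqrt{w}(\pa_\psi\sqrt{w})^2\varphi\,d\psi$ is discarded), one obtains
\beno
\f{d}{ds}\int_0^{\delta_0}w\varphi\,d\psi\le \f{2}{3}\int_0^{\delta_0}w^{3/2}|\pa_\psi^2\varphi|\,d\psi-2\int_0^{\delta_0}\varphi\,d\psi\le \f{Cc_0^{3/2}}{\delta_0}(X^*-s)^{3k(m+1)/2}-\delta_0.
\eeno

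\textbf{Contradiction.}
Integrating from $\bar x$ to $x_1$, using $\int_0^{\delta_0}w(x_1,\psi)\varphi\,d\psi\ge 0$ together with the contradiction hypothesis at $s=\bar x$ in the form $\int_0^{\delta_0}w(\bar x,\psi)\varphi\,d\psi\le c_0 L^{k(m+1)}\delta_0$, and inserting $\delta_0=(L/2)^k$, I arrive at
\beno
L^{k+1}\le C_1 c_0\,L^{k(m+2)}+C_2 c_0^{3/2}\,L^{3k(m+1)/2+1-k},
\eeno
with absolute constants $C_1,C_2$. The hypothesis $k(m+1)\ge 1$ is equivalent to $k(m+2)\ge k+1$, and $m\ge 1/3$ is equivalent to $3k(m+1)/2-k\ge k$, so both exponents on the right are at least $k+1$, with simultaneous equality only in the critical self-similar case $k=3/4$, $m=1/3$. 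Choosing $c_0$ so small that $C_1c_0+C_2c_0^{3/2}<1$ and restricting $L=X^*-\bar x$ below a universal bound (by replacing $\bar x$ with $\max(\bar x,X^*-\eta_0)$ for a fixed $\eta_0>0$, which still gives $\tilde x\ge\bar x$) then produces a contradiction, and \eqref{km2} follows.

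\textbf{Main obstacle.}
The delicate point is precisely the critical exponent pair $k=3/4$, $m=1/3$, which is the natural self-similar scaling near the separation and which saturates both hypotheses of the lemma at once. There all three terms in the contradiction inequality are of the same order $L^{7/4}$, so the contradiction comes purely from the coefficient comparison $1\le C_1c_0+C_2c_0^{3/2}$ failing; this forces the integration by parts and the cutoff construction to be carried out sharply so that $C_1,C_2$ remain under explicit control, and forces the eventual constant $c$ in \eqref{km2} to be a small absolute number depending only on the structural constants of $\varphi$.
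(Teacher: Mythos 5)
Your proof is correct, and the core mechanism is the same as the paper's: assume the one-sided smallness of $\pa_\psi w$ on $[\bar x,X^*)\times[0,\delta(s))$, integrate the Von Mises equation against a cutoff localized at scale $\delta$, integrate by parts twice to throw away the nonnegative dissipation $\int 2\sqrt{w}(\pa_\psi\sqrt{w})^2\varphi$, bound $w^{3/2}$ via the integrated smallness hypothesis, and then compare exponents using $k(m+2)\ge k+1$ and $k\bigl(\tfrac{3m}{2}+\tfrac12\bigr)+1\ge k+1$.

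The one genuine deviation is in the choice of cutoff. The paper uses the time-dependent cutoff $\bar\varphi(x,\psi)=\varphi\bigl(\psi/\delta(x)\bigr)$ with $\delta(x)=(X^*-x)^k$ shrinking as $x\to X^*$, exploits the monotonicity $-\pa_x\bar\varphi\ge 0$ and $-\pa_x\delta\ge 0$ to discard the extra time-derivative terms, and integrates in $x$ all the way to $X^*$ (where the localized energy degenerates to zero). You instead freeze the scale at $\delta_0=\delta(x_1)$ with $x_1=\bar x+L/2$ and integrate only over $[\bar x,x_1]$, using positivity $\int_0^{\delta_0}w(x_1,\cdot)\varphi\ge 0$ at the endpoint. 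Since $\delta_0\le\delta(s)$ on $[\bar x,x_1]$, the contradiction hypothesis still covers the support of your cutoff, so the estimates go through and the point you produce lies in $[\bar x,x_1]\times[0,\delta_0]\subset[\bar x,X^*)\times[0,\delta(\tilde x)]$. What your version buys is slightly less machinery: no time-dependent cutoff and no need for the sign observations on $\pa_x\bar\varphi$. What the paper's version buys is that it uses the full interval $[\bar x,X^*)$ and the collapse $\delta(X^*)=0$ directly, which is a bit more in the self-similar spirit; the numerology is otherwise identical, including the observation that $k=3/4$, $m=1/3$ is the critical case where all three terms in the final inequality scale like $L^{k+1}$ and the contradiction must come from choosing the constant small. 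Your remark that $\bar x$ may need to be replaced by $\max(\bar x,X^*-\eta_0)$ is exactly the role played by the paper's restriction $\bar x\in[X^*-1,X^*)$.
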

\begin{remark}
As $k\geq \frac{1}{m+1}$ and $m\geq \frac{1}{3},$ we have
\begin{align}
  km\geq  \frac{m}{m+1}\geq \frac{1}{4}.
\end{align}
\end{remark}
\begin{proof}
Take any $\bar{x}\in [X^*-1,X^*)$ and choose $\varphi$ to be a smooth non-increasing cut-off function so that
$$
\varphi\equiv 1\quad \text{in}\,\,[0,\frac{1}{2}],\quad\varphi\equiv 0\quad \text{in}\,\,(1,+\infty),\quad0\leq \varphi\leq 1 \quad \text{in}\,\,[0,+\infty).
$$
Let $\bar{\varphi}(x,\psi)=\varphi\Big(\frac{\psi}{\delta(x)}\Big)$.
Then we have
\begin{align}\label{phibar2dr}
 &|\partial_{\psi}  \bar{\varphi}|\leq \frac{C}{\delta(x)}, \quad|\partial^2_{\psi}  \bar{\varphi}|\leq \frac{C}{\delta(x)^2},\\
&\label{phibarsign}
    -\partial_x \bar{\varphi}\geq 0,\quad -\partial_x \delta(x)\geq0.
\end{align}

It follows from \eqref{eq:Prandtl-VM} that
\begin{align}\label{barMSP}
\int_0^{\delta(x)}\partial_xw\bar{\varphi} d\psi-\int_0^{\delta(x)}\sqrt{w}\partial^2_{\psi}w\bar{\varphi} d\psi=-2\int_0^{\delta(x)}\bar{\varphi} d\psi.
\end{align}

First of all, we get by \eqref{phibarsign} that
 \begin{align*}
   \int_0^{\delta(x)}\partial_xw\bar{\varphi} d\psi&=\partial_x\Big(\int_0^{\delta(x)}w\bar{\varphi} d\psi\Big)-w\bar{\varphi}\partial_x \delta -\int_0^{\delta(x)}w\partial_x \bar{\varphi}d\psi\\
   &\geq\partial_x\Big(\int_0^{\delta(x)}w\bar{\varphi} d\psi\Big),
 \end{align*}
 which gives
 \begin{align}\label{spartx}
  \int_{\bar{x}}^{X^*} \int_0^{\delta(x)}\partial_xw\bar{\varphi} d\psi dx\geq- \int_0^{\delta(\bar{x})}w\bar{\varphi}(\bar{x},\psi) d\psi.
 \end{align}
Secondly, we get by integration by parts that
\begin{align*}
-\int_0^{\delta(x)}\sqrt{w}\partial^2_{\psi}w\bar{\varphi} d\psi&=\int_0^{\delta(x)}2\sqrt{w}(\partial_{\psi}\sqrt{w})^2\bar{\varphi} d\psi
+\int_0^{\delta(x)}\sqrt{w}\partial_{\psi}(\sqrt{w})^2\partial_{\psi}\bar{\varphi} d\psi
\\&\geq -\frac{2}{3}\int_0^{\delta(x)}w^{\frac{3}{2}}\partial^2_{\psi}\bar{\varphi} d\psi,
\end{align*}
which gives
\begin{align}\label{partpsipsi}
-\int_{\bar{x}}^{X^*}\int_0^{\delta(x)}\sqrt{w}\partial^2_{\psi}w\bar{\varphi} d\psi dx\geq -\frac{2}{3}\int_{\bar{x}}^{X^*}\int_0^{\delta(x)}w^{\frac{3}{2}}\partial^2_{\psi}\bar{\varphi} d\psi dx.
\end{align}
Thirdly, we have
\begin{align}\label{bardelta}
 -2\int_{\bar{x}}^{X^*}\int_0^{\delta(x)}\bar{\varphi} d\psi dx\leq -\int_{\bar{x}}^{X^*}\delta(x)dx.
\end{align}
Putting \eqref{barMSP}, \eqref{spartx}, \eqref{partpsipsi} and \eqref{bardelta} together, we infer that
\begin{align}\label{barcond}
     \int_0^{\delta(\bar{x})}w\bar{\varphi}(\bar{x},\psi) d\psi +\frac{2}{3}\int_{\bar{x}}^{X^*}\int_0^{\delta(x)}w^{\frac{3}{2}}\partial^2_{\psi}\bar{\varphi} d\psi dx\geq\int_{\bar{x}}^{X^*}\delta(x)dx.
\end{align}

Next we argue by contradiction. Assume that
\begin{align}\label{barcondas}
 |\partial_\psi w(x,\psi)|\leq \epsilon_{k,m} \delta(x)^m,\quad x\in [\bar{x},X^*)\times[0,\delta(x)),
\end{align} \
where $\epsilon_{k,m}$ is a small positive constant to be determined.  Then by \eqref{barcond} and \eqref{phibar2dr}, we have
\begin{align*}
    \frac{1}{k+1}(X^*-\bar{x})^{k+1}&\leq \epsilon_{k,m}\frac{\delta(\bar{x})^{m+2}}{2}
   + \frac{2}{3}\int_{\bar{x}}^{X^*}\int_0^{\delta(x)}\epsilon_{k,m}^{\frac{3}{2}}\delta(x)^{\frac{3}{2}m}\psi^{\frac{3}{2}}\frac{C}{\delta(x)^{2}} d\psi dx\\&\leq \frac{1}{100} \frac{1}{k+1}(X^*-\bar{x})^{k(m+2)}+\frac{1}{100} \frac{1}{k+1}(X^*-\bar{x})^{k(\frac{3m}{2}+\frac{1}{2})+1},
\end{align*}
by taking $\epsilon_{k,m}$ small depending only on $k,m$.
Since $k\geq \frac{1}{m+1},m\geq \frac{1}{3},$ we have
\begin{align}
  k\big(\frac{3m}{2}+\frac{1}{2}\big)+1\geq k+1,\quad k(m+2)\geq k+1.
\end{align}
This shows that
\begin{align*}
   (X^*-\bar{x})^{k+1}\leq \frac{2}{100} (X^*-\bar{x})^{k+1},
\end{align*}
which leads to a contradiction.
\end{proof}

Now we prove Theorem \ref{thm:behavior}.

\begin{proof}
 Take $m=\frac{1}{3},k=\frac{1}{\frac{1}{3}+1}=\frac{3}{4}$ in Lemma \ref{lem:lower}. Then for any $\bar{x}<X^*<x_0$,
there exist a point $(\tilde{x},\psi_{\tilde{x}})\in[\bar{x},X^*)\times[0,(X^*-\tilde{x})^{\frac{3}{4}})$ and a positive constant $c_0$ independent of the choice of $\bar{x}$ such that
\begin{align}
\partial_\psi w(\tilde{x},\psi_{\tilde{x}})\geq c_0(X^*-\tilde{x})^{\frac{1}{4}}.
\end{align}

\no$\mathbf{Case\,1}$. If $\psi_{\tilde{x}}=0,$ then by \eqref{PvM} and Theorem \ref{thm:lower}, we have
\begin{align*}
\partial_\psi w(\tilde{x},\psi_{\tilde{x}})\leq C(X^*-\tilde{x})^{\frac{1}{4}}.
\end{align*}

\no$\mathbf{Case\, 2.}$ If $0<\psi_{\tilde{x}}<(X^*-x)^{\frac{3}{4}}$, we only need to consider the case
\begin{align}\label{>14}
\partial_\psi w(\tilde{x},\psi_{\tilde{x}})>(X^*-\tilde{x})^{\frac{1}{4}}.
\end{align}

First of all, if $\partial_\psi w(\tilde{x},0)>\frac{1}{4}(X^*-\tilde{x})^{\frac{1}{4}},$ then we can replace $(\tilde{x},\psi_{\tilde{x}})$ by $(\tilde{x},0)$, and thus \eqref{km24.1} is satisfied by Theorem \ref{thm:lower}.

If $\partial_\psi w(\tilde{x},0)<\frac{1}{4}(X^*-\tilde{x})^{\frac{1}{4}},$ then we proceed as following. Let $(\tilde{x},y_{\tilde{x}})$ correspond to be the point  $(\tilde{x},\psi_{\tilde{x}})$ by Von Mises transformation. Set
\begin{align*}
  y_2=\inf\big\{y_1\in[0,y_{\tilde{x}}]:\partial_y u(\tilde{x},y)\geq\frac{1}{2}\partial_y u(\tilde{x},y_{\tilde{x}})\quad \text{in} \quad [y_1,y_{\tilde{x}}]\big\}.
\end{align*}
Thanks to $\partial_\psi w(\tilde{x},0)<\frac{1}{4}(X^*-\tilde{x})^{\frac{1}{4}},$ we have by \eqref{>14} and \eqref{PvM} that $y_2>0.$
Then we have
\begin{align}\label{yyum2}
\partial_y u(\tilde{x},y_{2})=\frac{1}{2}\partial_y u(\tilde{x},y_{\tilde{x}}).
\end{align}

 On the other hand, by the assumption $\partial_{y}^2u\leq M_0$ and Lemma \ref{lem:one side}, there exists a positive constant $M_1>1$ such that
\begin{align*}
-M_1\leq\partial_{y}^2u\leq M_1\quad[0,X^*)\times \mathbf{R}_+,
\end{align*}
which along with  \eqref{yyum2}  implies that
\begin{align}\label{yx-y2}
y_{\tilde{x}}-y_2\geq \frac{\partial_y u (\tilde{x},y_{\tilde{x}})}{2M_1}.
\end{align}
We have by \eqref{eq:VM}  that
\begin{align}\label{k3ym}
\begin{split}
 \psi(\tilde{x},y_{\tilde{x}})&=\int_0^{y_{\tilde{x}}}udy
 =\int_0^{y_{\tilde{x}}}\int_0^{y'}\partial_y u(\tilde{x},y'')dy''dy'
 \\&\geq
 \int_{y_2}^{y_{\tilde{x}}}\int_{y_2}^{y'}\partial_y u(\tilde{x},y'')dy''dy'
 \geq \frac{1}{2}\partial_y u (\tilde{x},y_{\tilde{x}})\frac{(y_{\tilde{x}}-y_2)^2}{2}\\
 &\geq \frac{1}{2}\partial_y u (\tilde{x},y_{\tilde{x}})\frac{1}{2}\Big(\frac{\partial_y u (\tilde{x},y_{\tilde{x}})}{2M_1}\Big)^2=c(\partial_y u  (\tilde{x},y_{\tilde{x}}))^3,
 \end{split}
\end{align}
where we used $u|_{y=0}=0$,  the definition of $y_2$ and  \eqref{yx-y2}. Hence, by \eqref{PvM}, we have
$$ \psi_{\tilde{x}}=\psi(\tilde{x},y_{\tilde{x}})\geq c(\partial_\psi w  (\tilde{x},\psi_{\tilde{x}}))^3,$$
which along with $\psi_{\tilde{x}}<(X^*-x)^{\frac{3}{4}}$ gives
$$
(X^*-x)^{\frac{3}{4}}>\psi_{\tilde{x}}\geq c(\partial_\psi w  (\tilde{x},\psi_{\tilde{x}}))^3.
$$

Summing up, we obtain the upper bound.
\end{proof}

The following proposition is inspired by Lemma 3.2 in
\cite{DM}, while we remove the structure assumption on initial data near $y=0$ there.

\begin{proposition}\label{prop:uyy-upper}
Let $u$ be a solution constructed in Proposition \ref{prop:olei} with $u_0$ satisfying $\partial_{y}^2u_0\leq 1$ and $X^*\leq x_0.$ Then it holds that
$\partial_{y}^2u\leq 1$ in $[0,X^*)\times \mathbf{R}_+.$
\end{proposition}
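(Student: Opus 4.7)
I would work in Von Mises variables. By \eqref{PvM} and \eqref{eq:Prandtl-VM} with $\pa_x p=1$, the identity $2\partial_y^2 u = \sqrt{w}\partial_\psi^2 w = \partial_x w + 2$ shows that $\partial_y^2 u \leq 1$ is equivalent to $F := \partial_x w \leq 0$ on $[0,X^*)\times[0,+\infty)$. Differentiating \eqref{eq:Prandtl-VM} in $x$, exactly as in the derivation of the auxiliary equation for $f$ in the proof of Lemma \ref{lem:one side}, the function $F$ satisfies
$$
\partial_x F - \sqrt{w}\, \partial_\psi^2 F \;=\; \frac{F(F+2)}{2w}\quad\text{in}\quad(0,X^*)\times(0,+\infty),
$$
with parabolic boundary data $F(x,0)=\partial_xw(x,0)=0$ (since $w(x,0)\equiv 0$), $F(0,\psi)=2\partial_y^2 u_0-2\le 0$ (by hypothesis), and $F(x,\psi)\to-2$ as $\psi\to+\infty$ by Lemma \ref{lem:w-bound}. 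In particular, $F\le 0$ on the parabolic boundary of every $D_{\bar x}=[0,\bar x]\times[0,+\infty)$ with $\bar x<X^*$.

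Next I would argue by contradiction. If $F$ attained a positive value on some $D_{\bar x}$, its supremum would be achieved at an interior point $(x_*,\psi_*)$ with $\psi_*>0$, and there $\partial_x F\ge 0$, $\partial_\psi F=0$, $\partial_\psi^2 F\le 0$, so the equation gives $0\le F(F+2)/(2w)|_{(x_*,\psi_*)}>0$, which is merely consistent. Unlike the negative-minimum setting of Lemma \ref{lem:one side}, the weak maximum principle does not yield a contradiction directly; this is the obstruction avoided in \cite{DM} by a structural condition on $u_0$ near $y=0$. The idea is instead to use that $F\equiv 0$ is itself a solution of the same equation: taking $x_*$ to be the first $x$ at which $F$ touches $0$ from below (so $F\le 0$ on $[0,x_*]\times[0,+\infty)$ with $F(x_*,\psi_*)=0$ at some interior $\psi_*>0$), I write the source as $c\cdot F$ with $c=(F+2)/(2w)$ and invoke the strong maximum principle for the linear parabolic equation $\partial_x F-\sqrt{w}\partial_\psi^2 F-cF=0$ on a past parabolic neighborhood of $(x_*,\psi_*)$ where $w$ is bounded away from $0$ and hence $c$ is bounded. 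This forces $F\equiv 0$ on the entire past connected component of the touching point, which propagated backwards to $x=0$ yields $\partial_y^2 u_0=1$ on a nondegenerate interval, contradicting $u_0\in\cK$ together with the asymptotic $u_0(y)\to U(0)<+\infty$ (a standard approximation of $u_0$ handles initial data that saturate the bound on a set of positive measure).

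The principal difficulty is the degeneracy at $\psi=0$, where $w\to 0$ and the coefficient $c=(F+2)/(2w)$ blows up, so the strong maximum principle cannot be applied on the whole half-line $\psi\ge 0$ directly. I would address this by restricting first to the uniformly parabolic region $\{\psi\ge \psi_0\}$ for small $\psi_0>0$, where Oleinik's non-degeneracy $\partial_y u(x,0)\ge m>0$ on $[0,\bar x]$ yields $w\ge c_0>0$ and hence the standard strong maximum principle applies, and then handling the near-boundary strip $\{0\le\psi\le\psi_0\}$ separately via the expansion $w(x,\psi)=2\partial_y u(x,0)\psi+O(\psi^2)$ together with $F(x,0)=0$, which gives $F(x,\psi)=O(\psi)$ near $\psi=0$. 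Choosing $\psi_0$ small enough so that any putative positive maximum point $\psi_*$ lies in $\{\psi\ge\psi_0\}$ and combining the two regimes then closes the contradiction argument and completes the proof.
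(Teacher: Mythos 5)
Your reduction is the same as the paper's: set $F=\partial_xw=\sqrt{w}\partial_\psi^2w-2$, derive $\partial_xF-\sqrt{w}\partial_\psi^2F=\tfrac{F(F+2)}{2w}$, and check $F\le 0$ on the parabolic boundary. The gap is in how you close the argument. The entire difficulty of this proposition is that the zeroth-order coefficient $c=(F+2)/(2w)$ blows up like $1/\psi$ as $\psi\to 0$ (since $w\sim \partial_\psi w(x,0)\,\psi$), so neither the weak maximum principle (via $e^{-\lambda x}F$) nor the strong maximum principle applies up to $\psi=0$. Your plan to ``choose $\psi_0$ small enough so that any putative positive maximum point lies in $\{\psi\ge\psi_0\}$'' is exactly the statement that needs proof, and the bound $F=O(\psi)$ does not deliver it: a bound $F\le C\psi$ (which is in any case stronger than what Oleinik's regularity theory gives -- the paper's closing Remark only has $|\partial_xw|\le M\psi^{1/2+\alpha}$) is perfectly consistent with $F$ becoming positive at points arbitrarily close to $\psi=0$, so the first ``touching point'' may degenerate onto the boundary $\{\psi=0\}$ where $F\equiv 0$ trivially and no information is gained. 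Without a barrier or comparison argument in the strip $0<\psi<\psi_0$ that actually exploits the structure of the degeneracy, the contradiction never gets started. The paper's proof circumvents this by an $L^2$ energy estimate on $f_+$: integrating $\sqrt{w}f_+\partial_\psi^2f$ by parts produces the coercive term $\tfrac18\int\frac{(\partial_\psi w)^2}{w^{3/2}}(f_+)^2$, which near $\psi=0$ behaves like $\int\psi^{-3/2}(f_+)^2$ and therefore absorbs the singular potential $C\int\frac{(f_+)^2}{w}\sim C\int\psi^{-1}(f_+)^2$; away from $\psi=0$ one has $w\ge \tilde c>0$ and Gronwall closes the argument. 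This Hardy-type absorption is the key idea, and it has no counterpart in your pointwise scheme.

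A second, independent problem is your terminal contradiction. Even granting $F\equiv 0$ on the past of the touching point, ``$\partial_y^2u_0=1$ on a nondegenerate interval'' does \emph{not} contradict $u_0\in\cK$: the compatibility condition $u_{yy}(y)-\partial_xp(0)=O(y^2)$ in the definition of $\cK$ actually \emph{forces} $\partial_y^2u_0(0)=1$, and nothing in $\cK$ forbids $\partial_y^2u_0\equiv 1$ on a bounded interval. The contradiction you could legitimately extract is with $\lim_{\psi\to+\infty}F=-2$ (Lemma \ref{lem:w-bound}), but that requires the strong maximum principle to propagate $F\equiv0$ across an unbounded $\psi$-region, which again presupposes that the touching point and its past connected component live where the equation is uniformly parabolic -- i.e., it circles back to the unresolved boundary issue. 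The parenthetical appeal to ``a standard approximation of $u_0$'' is also not standard here: it would require continuity of the solution and of $X^*$ with respect to the data, which is not available. As written, the proposal identifies the right equation and the right obstruction but does not prove the proposition.
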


\begin{proof}
Let $f=\partial_x w =\sqrt{w}\partial^2_{\psi}w-2.$
By \eqref{PvM}, we have $2\partial^2_{y} u=\sqrt{w}\partial^2_{\psi}w.
$ Thus, we only need to show that
$$f\leq 0\quad\text{in}\quad[0,X^*)\times \mathbf{R}_+.$$

Assume that $\sup_{[0,X^*)\times \mathbf{R}_+} f>\epsilon_0$ for some $\epsilon_0>0$.  We define
$$
x_1=\inf\big\{x'\in[0,X^*)|\exists \psi_{x'}\in \mathbf{R}_+ \,\,\text{so that}\, \,f(x',\psi_{x'})\geq\frac{\epsilon_0}{2}\big\}.
$$
Due to $\partial_{y}^2u_0\leq 1,$ we have by \eqref{PvM} that
\begin{align}\label{pbx0}
   f|_{x=0}\leq 2-2\leq 0.
\end{align}
Hence, $0<x_1<X^*.$

In the following, we only consider $f$ in $[0,x_1]\times \mathbf{R}_+.$
It is easy to see that
\begin{align}\label{Mfeq}
 \partial_x f-\frac{f(f+2)}{2w}-\sqrt{w}\partial^2_{\psi}f=0 .
\end{align}
Now we consider the value of $f_+$ on ``parabolic boundary."
Thanks to $w|_{\psi=0}=0$ and Lemma \ref{lem:w-bound}, we have
\begin{align}\label{pbx1}
   f|_{\psi=0}=\partial_x w|_{\psi=0}=0,\quad \displaystyle\lim_{\psi\to+\infty} f(x,\psi)= -2\quad x\in[0,X^*).
\end{align}
Hence, by \eqref{pbx0} and \eqref{pbx1},
\begin{align}\label{f+pb}
f_+= 0\,\,\text{on}\,\, [0,X^*)\times\{\psi=0\}\cup\{x=0\}\times\mathbf{R}_+\cup[0,x_1]\times[K_{x_1},+\infty)
\end{align}
 for some large constant $K_{x_1}>0.$

Using \eqref{eq:VM} and \eqref{PvM}, a straight calculation yields
 $$\sqrt{w}\partial_{\psi}f=2u\frac{1}{u}\partial_{y}^3u=2\partial_{y}^3u.$$
As $|\partial_{y}^3u|\leq C_{x_1}$ and $f_+|_{\psi=0}= 0,$
we have
$$
\sqrt{w}f_+\partial_{\psi}f\rightarrow 0\quad\text{as}\quad\psi\rightarrow0.
$$
Then we get by integration by parts that
\begin{align}\label{pppp1}
 \int_{\mathbf{R}_+}\sqrt{w}f_+\partial_{\psi}^2f=
  -\frac{1}{2}\int_{\mathbf{R}_+}\partial_{\psi}(\sqrt{w})\partial_{\psi}(f_+)^2
  -\int_{\mathbf{R}_+}\sqrt{w}(\partial_{\psi}f_+)^2.
\end{align}

As $f|_{\psi=0}=2\partial_{y}^2u|_{y=0}-2=0$ and $|\partial_{y}^3u|\leq C_{x_1}$, we have $f\leq C_{x_1}y$ and hence,
 \begin{align}\label{f+y2}
 f_+^2\leq C_{x_1}y^2.
\end{align}
As $\partial_y u(x,0)>0$ on $[0,x_1]\times \mathbf{R}_+,$ there exist two positive constants $M_{x_1},m_{x_1}$ such that $$M_{x_1}>\partial_y u(x,0)>m_{x_1}\quad x\in[0,x_1].$$
Furthermore, there exists a positive constant $\delta_{x_1}$ so that
\begin{align}\label{quan1}
  2{ M_{x_1}}>\partial_y u(x,y)>\frac{m_{x_1}}{2}\quad x\in[0,x_1],\,y\in[0,\delta_{x_1}].
\end{align}
On the other hand, by \eqref{eq:VM} and $u|_{y=0}=0$, we have
\begin{align}\label{deltyx1}
 \psi(x,\delta_{x_1})&=\int_0^{\delta_{x_1}}udy
 \geq
 \int_{0}^{\delta_{x_1}}\int_{0}^{y'}\frac{m_{x_1}}{2}dy''dy'=c_{x_1}>0.
\end{align}
Then we conclude that for any $(x,\psi)\in[0,x_1]\times[0,c_{x_1}],$\begin{align*}
  4\frac{M_{x_1}}{m_{x_1}}\frac{1}{y}\geq\frac{\partial_\psi w }{2\sqrt{w}}=\frac{2\partial_y u(x,y)}{2u}\geq\frac{m_{x_1}}{4M_{x_1}}\frac{1}{y},
\end{align*}
which along with \eqref{f+y2} gives
\begin{align*}
\frac{\partial_\psi w }{2\sqrt{w}}f_+^2\rightarrow0\quad\text{as}\,\,\psi\rightarrow0.
\end{align*}
Then we get by integration by parts that
\begin{align}
-\frac{1}{2}\int_{\mathbf{R}_+}\partial_{\psi}(\sqrt{w})\partial_{\psi}(f_+)^2
=\frac{1}{2}\int_{\mathbf{R}_+}\partial_{\psi}^2(\sqrt{w})(f_+)^2.
\end{align}
Note that
\beno
&&\partial_{\psi}^2(\sqrt{w})=\frac{(f+2)}{2w}-\frac{1}{4}\frac{(\partial_\psi w )^2}{w^{\frac{3}{2 }}},\\
&&\frac{f(f+2)}{2w}f_+\leq \big(\frac{\epsilon_0}{2}+2\big)\frac{(f_+)^2}{2w}\quad\text{on}\quad[0,x_1]\times \mathbf{R}_+.
\eeno
The we infer that
\begin{align}\label{finaint}\begin{split}
 \frac{1}{2}\frac{d}{dx}\int_{\mathbf{R}_+}(f_+)^2+\frac{1}{8}\int_{\mathbf{R}_+}\frac{(\partial_\psi w )^2}{w^{\frac{3}{2 }}}(f_+)^2+\int_{\mathbf{R}_+}\sqrt{w}(\partial_\psi f_+)^2\leq C_{x_1}\int_{\mathbf{R}_+}\frac{(f_+)^2}{w}.\end{split}
\end{align}

By \eqref{quan1}  and \eqref{PvM}, we have
\begin{align}\label{quan2}
 4M_{x_1}>\partial_\psi w (x,\psi)>m_{x_1}\quad x\in[0,x_1],\,\psi\in[0,c_{x_1}].
\end{align}
By $w|_{\psi=0}=0$ and \eqref{deltyx1}, for fixed large $K$, there exists a positive constant $\psi^{x_1}_0<c_{x_1}$ so that for $x\in[0,x_1],\psi\in[0,\psi^{x_1}_0]$,
\begin{align*}
\frac{(\partial_\psi w (x,\psi))^2}{w^{\frac{3}{2 }}}\geq \frac{(m_{x_1})^2}{(4M_{x_1})^{\frac{3}{2 }}\psi^{\frac{3}{2 }}}\geq\frac{K}{m_{x_1}\psi}\geq \frac{K}{w}.
\end{align*}
On the other hand, since $w$ is a non-decreasing function, we have
\begin{align*}
   w\geq m_{x_1}\psi^{x_1}_0=\tilde{c}_{x_1}>0\quad \text{on}\quad [\psi^{x_1}_0,+\infty).
\end{align*}
Then we infer that
\begin{align*}
&C_{x_1}\int_{0}^{\psi^{x_1}_0}\frac{(f_+)^2}{w}\leq \frac{1}{8}\int_{\mathbf{R}_+}\frac{(\partial_\psi w )^2}{w^{\frac{3}{2 }}}(f_+)^2,\\
&C_{x_1}\int_{\psi^{x_1}_0}^{+\infty}\frac{(f_+)^2}{w}\leq C_{x_1}\int_{\mathbf{R}_+}(f_+)^2,
\end{align*}
which along with \eqref{finaint} give
\begin{align}
    \frac{1}{2}\frac{d}{dx}\int_{\mathbf{R}_+}(f_+)^2\leq C_{x_1}\int_{\mathbf{R}_+}(f_+)^2.
\end{align}
Since $f_+=0$ on $\{x=0\}\times\mathbf{R}_+$, by Gronwall's inequality, we have $f_+=0$ in $[0,x_1]\times\mathbf{R}_+,$ which is a contradiction to the definition of $x_1$, and thus the proof is completed.
\end{proof}

\begin{remark}
In the proof of lemma, we use $|\partial_{y}^3u|\leq C_{x_1}$ in order to show that $\sqrt{w}f_+\partial_{\psi}f$ and $\frac{\partial_\psi w }{2\sqrt{w}}f_+^2$ vanish on $\psi=0$. However, we don't have this information in Proposition \ref{prop:olei}. We can make it rigorous by the following argument.

By Theorem 2.1.14 in \cite{Olei}, we have $|\partial_x w|\leq M_{x_1}\psi^{1-\beta},\,0\leq x\leq x_1, \,0\leq \psi\leq \psi_1,$ for some $0<\beta<\frac{1}{2}.$ For convenience, we denote $\beta=\frac{1}{2}-\alpha$ where $0<\alpha<\frac{1}{2}.$ Then
\begin{align}\label{xd}
|\partial_x w|\leq M_{x_1}\psi^{\frac{1}{2}+\alpha},\,0\leq x\leq x_1,\, 0\leq \psi\leq \psi_1.
\end{align}
As $f=\partial_x w$, then we get
 \begin{align}\label{qcxf}
 f_+\leq M_{x_1}\psi^{\frac{1}{2}+\alpha},\,\quad 0\leq x\leq x_1,\, 0\leq \psi\leq \psi_1.
\end{align}
This implies that for any $x\in[0,x_1],$
$$
\frac{\partial_\psi w }{2\sqrt{w}}f_+^2\leq M_{x_1}\psi^{\frac{1}{2}+2\alpha}\rightarrow 0\quad\text{as}\,\,\psi\rightarrow 0.
$$ 
Take $\{\psi_n\}$ such that $\psi_n\rightarrow0,\, n\rightarrow+\infty$ and $\psi_n\in(0,\psi_1].$ Then by mean value theorem, for any $x\in[0,x_1],$
there exists a family of points $\{(x,\xi_n^x)\}$ with $\xi_n^x\in(0,\psi_n)$ so that
\begin{align}\label{xf}\begin{split}
|(\partial_\psi(\sqrt{w}\partial_xw))(x,\xi_n^x)|&=\Big|\frac{\sqrt{w}\partial_xw(x,\psi_n)-\sqrt{w}\partial_xw(x,0)}{\psi_n}\Big|
\\&=\Big|\frac{\sqrt{w}\partial_xw(x,\psi_n)}{\psi_n}\Big|\leq \psi_n^{\alpha}\rightarrow 0, \quad n\rightarrow+\infty.
\end{split}
\end{align} 
Note that 
$$\partial_\psi(\sqrt{w}\partial_xw)=\sqrt{w}\partial^2_{x\psi}w+\frac{\partial_\psi w}{2\sqrt{w}}\partial_xw,$$ 
and by \eqref{xd}, for any $x\in[0,x_1],$
$\Big|\frac{\partial_\psi w}{2\sqrt{w}}\partial_xw\Big|\leq M_{x_1}\psi^{\frac{1}{2}+\alpha-\frac{1}{2}}\rightarrow0$ as $\psi\rightarrow0$. Then we have by \eqref{xf} that $\sqrt{w}\partial^2_{\psi x}w(x,\xi_n^x)\rightarrow0$ as $n\rightarrow+\infty.$ Therefore, $\sqrt{w}\partial_{\psi}f(x,\xi_n^x)\rightarrow0$ as $n\rightarrow+\infty$. This shows that 
 for any $x\in[0,x_1],$ 
 $$\sqrt{w}f_+\partial_{\psi}f(x,\xi_n^x)\rightarrow0\quad \text{as}\,\,\,n\rightarrow+\infty.$$

\end{remark}
\section{General adverse pressure gradient}\label{nonconstant}

In this section, we present some key changes for general adverse pressure $p$, which holds  that for some constants $C,c>0,$
$$0<c<\partial_xp <C,\quad |\partial^2_{x}p|\leq C.$$
Now $(U,p)$ satisfies the Bernoulli equation
\beno
U(x)^2=Const-2p(x).
\eeno
As $0<c<\partial_xp <C,$ there exists a point $x_0$ such that $U(x_0)=0$ and $U(x)>0,x<x_0$. We denote the first vanishing point of $U$ by $x_0.$ Then we have $Const-2p(x_0)=U(x_0)=0,$ which implies
\begin{align*}
\begin{split}U(x)^2=2p(x_0)-2p(x)=2(p(x_0)-p(x))=2\int_x^{x_0}\frac{dp(x)}{dx}dx.
\end{split}\end{align*}
Therefore,
\begin{align}\label{keyU}
  2c(x_0-x)\leq U(x)^2\leq 2C(x_0-x).
\end{align}

Using Von Mises transformation \eqref{eq:VM}, we have
\begin{align}\label{MSPnonc}
    \begin{split}
     &w_x-\sqrt{w}\partial^2_{\psi\psi}w=-2\frac{dp(x)}{dx} \quad \text{in}\quad (0,X_*)\times(0,+\infty), \\&w_0(\psi)=u_0^2(y(\psi)),\quad\,w(x,0)=0,
       \quad\displaystyle\lim_{\psi\to+\infty} w(x,\psi)= U(x)^2.
    \end{split}
 \end{align}

Now we explain some key changes in the proof.

\begin{itemize}

\item[1.] For the proof of the key Lemma \ref{lem:one side}, the main difference is that, at $(x,\psi)=z_{min}\in D_{\bar{x}}$ where $f=\pa_xw$ takes its negative minimum, we have
$$c\frac{f}{w}+\frac{f^2}{2w}<\frac{dp(x)}{dx}\frac{f}{w}+\frac{f^2}{2w} \leq 2\frac{d^2p(x)}{dx^2} \leq 2C,$$
which also implies
$$-C_1\leq f(z_{min})<0.$$
For the other parts in section 3, we just use the fact $\pa_xp\ge c>0$.

\item[2.] The arguments in section 4 just rely on  Lemma \ref{lem:one side}.

\item[3.] The proof of Lemma \ref{lem:lower}(thus Theorem \ref{thm:behavior})  just uses the fact that $\pa_xp\ge c>0$.

\item[4.]  For Proposition \ref{prop:uyy-upper}, the condition $\partial_{y}^2u_0\leq 1$ is replaced by
\begin{align}
\partial_{y}^2u_0\leq \frac{dp}{d x}(0).\label{pxf+leq0}
\end{align}
Moreover, we need to assume that
\begin{align}
\frac{d^2p}{d x^2}\ge 0.\label{extravp}
\end{align}
The main reason is that if $\frac{d^2p}{dx^2}\neq 0,$ then \eqref{Mfeq} becomes
\begin{align*}
 \partial_x f-\frac{f(f+2\frac{dp}{d x})}{2w}-\sqrt{w}\partial^2_{\psi}f+2\frac{d^2p}{d x^2}=0,
\end{align*}
where $f=\pa_xw.$ The condition \eqref{pxf+leq0} ensures that $f_+\leq 0$ on $\{x=0\}\times\mathbf{R}_+$, and the sign of $\frac{d^2p}{d x^2}$ in \eqref{extravp} ensures that the inequality \eqref{finaint} still holds so that we can draw the conclusion
$$\partial_{y}^2u\leq C.
$$
\end{itemize}

\section*{Acknowledgments}

 Z. Zhang is partially supported by NSF of China under Grant 11425103.
\medskip

 \end{document}